\newcommand{\mJ}{\mathbf{J}}
\newcommand{\mQ}{\mathbf{Q}}
\newcommand{\mP}{\mathbf{P}}
\newcommand{\mW}{\mathbf{W}}
\newcommand{\ve}{\mathbf{e}}
\newcommand{\vf}{\mathbf{f}}
\newcommand{\vi}{\mathbf{i}}
\newcommand{\vj}{\mathbf{j}}
\newcommand{\vl}{\mathbf{l}}
\newcommand{\vm}{\mathbf{m}}
\newcommand{\vn}{\mathbf{n}}
\newcommand{\bpi}{\mbox{\boldmath$\pi$}}
\newcommand{\bnu}{\mbox{\boldmath$\nu$}}
\newcommand{\sA}{\mathcal{A}}
\newcommand{\sI}{\mathcal{I}}
\newcommand{\sS}{\mathcal{S}}
\newcommand{\sU}{\mathcal{U}}
\newtheorem{theorem}{Theorem}
\newtheorem{lemma}{Lemma}
\newtheorem{remark}{Remark}
\newtheorem{corollary}{Corollary}
\journal{CMAME}
\begin{document}

\begin{frontmatter}



\title{Sparse Pseudospectral Approximation Method}


\author[label3]{Paul G. Constantine\corref{cor1}}
\ead{paul.constantine@stanford.edu}
\cortext[cor1]{Building 500, Room 501-L, Tel: (650) 723-2330}
\author[label1]{Michael S. Eldred}
\ead{mseldre@sandia.gov}
\author[label1]{Eric T. Phipps}
\ead{etphipp@sandia.gov}
\fntext[label2]{Sandia National Laboratories is a multi-program laboratory managed
and operated by Sandia Corporation, a wholly owned subsidiary of Lockheed Martin Corporation, for the U.S. Department
of Energy's National Nuclear Security Administration under contract DE-AC04-94AL85000.}

\address[label1]{Sandia National Laboratories\fnref{label2}, Albuquerque, NM 87185}
\address[label3]{Stanford University, Stanford, CA 94305}

\begin{abstract}
Multivariate global polynomial approximations -- such as polynomial chaos or stochastic collocation methods -- are now
in widespread use for sensitivity analysis and uncertainty quantification. The pseudospectral variety of these methods
uses a numerical integration rule to approximate the Fourier-type coefficients of a truncated expansion in orthogonal
polynomials. For problems in more than two or three dimensions, a sparse grid numerical integration rule offers
accuracy with a smaller node set compared to tensor product approximation. However, when using a sparse rule to
approximately integrate these coefficients, one often finds unacceptable errors in the coefficients associated with 
higher degree polynomials.

By reexamining Smolyak's algorithm and exploiting the connections between interpolation and projection in tensor
product spaces, we construct a \emph{sparse pseudospectral approximation method} that accurately reproduces the
coefficients for basis functions that naturally correspond to the sparse grid integration rule. The compelling numerical
results show that this is the proper way to use sparse grid integration rules for pseudospectral approximation.
\end{abstract}

\begin{keyword}
uncertainty quantification\sep sparse grids\sep pseudospectral methods\sep polynomial chaos\sep
stochastic collocation\sep non-intrusive spectral projection


\end{keyword}

\end{frontmatter}


\section{Introduction}

As the power and availability of computers has increased, the profile of simulation in scientific and engineering
endeavors has risen. Computer simulations that model complex physical phenomena now regularly aid in decision making
and design processes. However, the complexity and computational cost of the codes often render them impractical for
design and uncertainty studies, where many runs at different input parameter values are necessary to compute statistics
of interest. In such cases, designers use a relatively small number of high fidelity runs to build cheaper surrogate
models, which are then used for the studies requiring many model evaluations.

It is now common to use a multivariate global polynomial of the input parameters as the surrogate, particularly when
one desires estimates of integrated quantities such as mean and variance of simulation results. Additionally, the
polynomial surrogate is typically much cheaper to evaluate as a function of the input parameters, which allows sampling
and optimization studies at a fraction of the cost. In an uncertainty quantification context -- where the input
parameters often carry the interpretation of random variables -- this polynomial approximation method appears under the
labels \emph{polynomial chaos}~\cite{Xiu02,Ghanem91} or \emph{stochastic collocation}~\cite{Xiu05,Babuska07}, amongst
others.

One of the primary disadvantages of the polynomial methods is the rapid growth in the work required to compute
the approximation as the number of model input parameters increases; this generally limits the
applicability of these methods to models with fewer than ten input parameters. To combat this apparent curse of
dimensionality, many have proposed to use so-called sparse grid methods~\cite{Griebel04}, which deliver comparable
accuracy for some problems using far fewer function evaluations to build the surrogate. The sparse grid is a set of
points in the input parameter space that is the union of carefully chosen tensor product grids. When the tensor grids
are formed from univariate point sets with a nesting property, such as the Chebyshev points, the number of points in the
union of tensor grids is greatly reduced -- although this nesting feature is not necessary for the construction of the
sparse grids. The points in the sparse grid can be used as a numerical integration rule~\cite{Novak96,Nobile08}, where
the weights are linear combinations of weights from the member tensor grids. Alternatively, the interpolating tensor product
Lagrange polynomials constructed on the member tensor grids can be linearly combined in a similar fashion to yield a
polynomial surrogate~\cite{Barthelmann00}, since a linear combination of polynomials is itself a polynomial. 

Another popular polynomial representation employs a multivariate orthogonal polynomial basis. When the coefficients of
a series in this basis are computed by projecting the unknown function onto each basis, the series is a spectral
projection or Fourier series~\cite{Szego39,Dunkl01}; this is also known as the polynomial chaos expansion in an
uncertainty quantification context~\cite{Xiu02,Ghanem91}. The series must be truncated for computation; convergence to
the true function occurs in the mean-squared sense as one adds more basis polynomials. If the integrals in the
projections are approximated with a numerical integration rule, this method is known as a pseudospectral
projection~\cite{Xiu07,LeMaitre20029}. These integral approximations only require the simulation outputs evaluated at
the quadrature points of the input space.

The question that naturally arises is: \emph{Which numerical integration rule is appropriate to approximate the Fourier
coefficients?} Some early attempts used Monte Carlo integration~\cite{Reagan2003545}, but its relative inaccuracies
overwhelm the spectral accuracy of the truncated Fourier series. Other attempts used tensor product Gaussian
quadrature rules, but they do not scale to high dimensional parameter spaces due to the exponential
increase in the number of quadrature points with dimension. The sparse-grid quadrature rules have shown promise for
retaining the spectral accuracy while alleviating the curse of dimensionality. However, in practice this approach
produces unacceptable errors in the coefficients associated with the higher order basis polynomials, which forces a
much stricter truncation than might be expected for the number of function evaluations~\cite{Eldred08}. 

This paper presents a \emph{sparse pseudospectral approximation method} (SPAM) for computing the coefficients of the
truncated Fourier series with the points of the sparse grid integration rule that eliminates the error in the
coefficients associated with higher degree polynomials. This allows the number of terms in the expansion to be
consistent with the number of points in the sparse-grid integration rule. The key is to separately compute the
coefficients of a tensor product polynomial expansion for each tensor grid in the sparse grid. The linear combination
of the tensor weights used to produce the sparse-grid integration weights is then used to linearly combine the
coefficients of each tensor expansion. We show that this method produces a pointwise equivalent polynomial surrogate to
the one constructed from a linear combination of tensor product Lagrange polynomials. Therefore error bounds from that
context can be applied directly. 

Recently, in the context of spectral methods for discretized PDEs, Shen and
coauthors~\cite{Shen10,Shen10-2} proposed and analyzed a closely related sparse spectral approximation using a
hierarchical basis of Chebyshev polynomials; the hierarchical structure results in increased efficiency. Their
computation of the coefficients for the hierarchical basis follow a comparable construction to the one we present.
However, their focus is on approximating the solution to a high-dimensional PDE, as opposed to more general function
approximation. 

The remainder of the paper is structured as follows. In Section \ref{sec:background}, we review the relationship
between Lagrange polynomial interpolation on a set of quadrature points and the pseudospectral approximation for
univariate functions; we then extend this analysis to multivariate tensor product approximation. Section
\ref{sec:background} closes with a review of Smolyak's algorithm. In Section
\ref{sec:method}, we  detail the SPAM for approximating the Fourier coefficients using the function evaluations
at the sparse-grid integration points followed by some interesting analysis results. In Section \ref{sec:numerical}, we
present numerical experiments from (i) a collection of scalar bivariate functions and (ii) an elliptic PDE
model with parameterized coefficients. In each experiment, we compare the approximate Fourier coefficients from the SPAM
with ones computed directly with the sparse grid integration rule. Finally we conclude with a summary and discussion in
Section \ref{sec:conclusions}.

\section{Background and Problem Set-up}
\label{sec:background}

In this section, we briefly review the background necessary to understand the SPAM; in particular, we examine the
relationship between the Lagrange interpolation on a set of Gaussian quadrature points and a pseudospectral
approximation in a basis of orthonormal polynomials. One purpose of this review is to set up the notation, which departs
slightly from the notation in the disparate references. For the orthogonal polynomials, we follow the notation
of~\cite{Gautschi04}.

Consider a multivariate function $f:\sS\rightarrow \mathbb{R}$, where the domain $\sS\subset\mathbb{R}^d$ has a
product structure
\begin{equation}
\sS = \sS_1 \times \cdots \times \sS_d.
\end{equation}
Define a $d$-dimensional point $s=(s_1,\dots,s_d)\in \sS$. The domain is equipped with a positive,
separable weight function $w:\sS\rightarrow\mathbb{R}_+$ where $w(s) = w_1(s_1)\cdots w_d(s_d)$ and 
\begin{equation}
\int_{\sS_k} s_k^a \,w_k(s_k) \,ds_k <\infty,\qquad k=1,\dots,d,\quad a=1,2,\dots
\end{equation}
The $w_k$ are normalized to integrate to 1, which allows the interpretation of $w(s)$ as a probability density
function. In general, we consider functions which are square-integrable on $\sS$, i.e.
\begin{equation}
\int_\sS f(s)^2 \,w(s)\,ds \;<\; \infty.
\end{equation}
Such functions admit a convergent Fourier series in orthonormal basis polynomials,
\begin{equation}
f(s) \;=\; \sum_{i_1=1}^\infty \cdots \sum_{i_d=1}^\infty \hat{f}_{i_1,\dots,i_d}\, \pi_{i_1}(s_1)\cdots\pi_{i_d}(s_d)
\;=\; \sum_{\vi\in\mathbb{N}^d} \hat{f}_{\vi} \, \pi_{\vi}(s),
\end{equation}
where the equality is in the $L_2$ sense, $\vi=(i_1,\dots,i_d)$ is a multi-index, and
\begin{equation}
\label{eq:fourier}
\hat{f}_{\vi} \;=\; \int_\sS f(s)\,\pi_{\vi}(s)\,w(s)\,ds
\end{equation}
is the Fourier coefficient associated with the basis polynomial $\pi_{\vi}(s)$. The $\pi_{i_k}(s_k)$ are univariate
polynomials in $s_k$ of degree $i_k-1$ that are orthonormal with respect to $w_k(s_k)$. In general, a pseudospectral
method uses a numerical integration rule to approximate a subset of the integrals \eqref{eq:fourier}; the remaining
terms are discarded. 

While any square-integrable function admits a convergent
Fourier series in theory, the polynomial approximation methods perform best on a much smaller class of smooth functions;
we will restrict our attention to such function classes when citing appropriate error bounds. Before diving into the
multivariate approximation, we first review the univariate case. 

\subsection{Gaussian Quadrature, Collocation, Pseudospectral Methods}

Consider the problem set-up above with $d=1$. Let $\bpi(s)=[\pi_1(s),\dots,\pi_{n}(s)]^T$ be a vector of the first $n$
polynomials that are orthonormal with respect to the weight function $w(s)$. The components of $\bpi(s)$ satisfy a
recurrence relationship, which we can write in matrix form as
\begin{equation}
\label{eq:mat3term}
s\bpi(s) = \mJ\bpi(s) + \beta_{n+1}\pi_{n+1}(s)\ve_{n},
\end{equation}
where $\ve_n$ is an $n$-vector of zeros with a one in the last entry, and $\mJ$ (known as the \emph{Jacobi matrix}) is
the symmetric, tridiagonal matrix containing the recurrence coefficients,
\begin{equation}
\label{eq:jacobi}
\mJ = 
\begin{bmatrix}
\alpha_1 & \beta_1& & &  \\
\beta_2 & \alpha_2 & \beta_3 & &  \\
 & \ddots & \ddots & \ddots &  \\
 & & \beta_{n-1} & \alpha_{n-1} & \beta_{n}\\
 & & & \beta_{n} & \alpha_{n}
\end{bmatrix}.
\end{equation}
The zeros of $\pi_{n+1}(s)$ generate eigenvalue/eigenvector pairs of $\mJ$ by \eqref{eq:mat3term}, which we write
\begin{equation}
\label{eq:eigJ}
\mJ = \mQ\Lambda\mQ^T,\qquad \Lambda = \mathrm{diag}\big([\lambda_1,\dots,\lambda_{n}]\big),
\end{equation}
where $\mQ(i,j)=\pi_i(\lambda_j)/\|\bpi(\lambda_j)\|$ are the elements of the normalized eigenvectors. The zeros
$\lambda_j$ of $\pi_{n+1}(s)$ are the points of the $n$-point Gaussian quadrature rule for $w(s)$; the quadrature
weights $\nu_j\in\mathbb{R}_+$ are given by
\begin{equation}
\nu_j = \frac{1}{\|\bpi(\lambda_j)\|^2},
\end{equation}
which are the squares of the first component of the $j$th eigenvector. A Gaussian quadrature approximation to the
integral is written
\begin{equation}
\label{eq:quad1d}
\int_{\sS} f(s)\,w(s)\,ds \;\approx\; \sU_q^n(f) \;=\; \sum_{j=1}^{n} f(\lambda_j)\,\nu_j \;=\; \vf^T\bnu.
\end{equation}
The $\sU_q^n$ denotes the linear operation of quadrature applied to $f$; the subscript $q$ is for quadrature. This
notation will be used later when discussing sparse grids. The $n$-vector $\vf$ contains the evaluations of $f(s)$ at
the quadrature points, and the $n$-vector $\bnu$ contains the weights of the quadrature rule. It will be notationally
convenient to define the matrices 
\begin{equation}
\label{eq:PW}
\mP(i,j)=\pi_i(\lambda_j), \qquad \mW=\mathrm{diag}([\sqrt{\nu_1},\dots,\sqrt{\nu_{n}}]),
\end{equation}
and note that the orthogonal matrix of eigenvectors $\mQ$ can be written $\mQ=\mP\mW$.

The spectral collocation approximation of $f(s)$ constructs a Lagrange interpolating polynomial through the Gaussian
quadrature points. Since the points are distinct, the $n-1$ degree interpolating polynomial is unique. We write this
approximation $\sU_l^n(f)$, where the subscript $l$ is for \emph{Lagrange interpolation}, as
\begin{equation}
\label{eq:collocation1d}
f(s) \;\approx\; \sU_l^n(f) \;=\; \sum_{i=1}^{n} f(\lambda_i)\,\ell_i(s) \;=\; \vf^T\vl(s).
\end{equation}
The parameterized vector $\vl(s)$ contains the Lagrange cardinal functions
\begin{equation}
\label{eq:lagrange}
\ell_i(s) = \prod_{j=1,\;j\not=i}^{n} \frac{s-\lambda_j}{\lambda_i-\lambda_j}.
\end{equation}
By construction, the collocation polynomial $\sU_{l}^n(f)$ interpolates $f(s)$ at the Gaussian quadrature points.

The pseudospectral approximation of $f(s)$ is constructed by first truncating its Fourier series at $n$ terms and
approximating each Fourier coefficient with a quadrature rule. If we use the $n$-point Gaussian quadrature, then we can
write the approximation as
\begin{equation}
\label{eq:pseudospec}
f(s) \;\approx\; \sU_{p}^n(f) \;=\; \sum_{i=1}^{n}\hat{f}_i\,\pi_i(s) \;=\;\hat{\vf}^T\bpi(s),
\end{equation}
where $\hat{f}_i$ is the pseudospectral coefficient,
\begin{equation}
\label{eq:pseudocoeff}
\hat{f}_i = \sum_{j=1}^{n} f(\lambda_j)\,\pi_i(\lambda_j)\,\nu_j,
\end{equation}
and the vector $\hat{\vf}$ contains all coefficient approximations; the subscript $p$ on $\sU_p^n$ is for
\emph{pseudospectral}. Note that we have overloaded the notation by defining $\hat{f}_i$ as the pseudospectral
coefficient \eqref{eq:pseudocoeff}, instead of the true Fourier coefficient in \eqref{eq:fourier}. We next state two
lemmas about the relationship between the spectral collocation and pseudospectral approximations for future reference.

\begin{lemma}
\label{lem:dft}
The vector of evaluations of $f$ at the quadrature points $\vf$ is related to the pseudospectral coefficients
$\hat{\vf}$ by
\begin{equation}
\hat{\vf} \;=\; \mQ\mW\vf \;=\; \mP\mW^2\vf. 
\end{equation}
\end{lemma}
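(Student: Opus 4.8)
The plan is to translate the componentwise definition \eqref{eq:pseudocoeff} of the pseudospectral coefficients into matrix form and then invoke the factorization $\mQ = \mP\mW$ recorded just after \eqref{eq:PW}. First I would rewrite the defining sum $\hat{f}_i = \sum_{j=1}^n f(\lambda_j)\,\pi_i(\lambda_j)\,\nu_j$ by substituting $\pi_i(\lambda_j) = \mP(i,j)$ from \eqref{eq:PW} and recognizing $f(\lambda_j)$ as the $j$th entry of $\vf$. Collecting the weights $\nu_j$ into the diagonal matrix $\mathrm{diag}([\nu_1,\dots,\nu_n])$, the sum is exactly the $i$th row of a matrix--vector product, so that $\hat{\vf} = \mP\,\mathrm{diag}([\nu_1,\dots,\nu_n])\,\vf$.

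The second step is a definitional identification: since $\mW = \mathrm{diag}([\sqrt{\nu_1},\dots,\sqrt{\nu_n}])$ by \eqref{eq:PW}, squaring gives $\mW^2 = \mathrm{diag}([\nu_1,\dots,\nu_n])$. Substituting yields the second claimed equality $\hat{\vf} = \mP\mW^2\vf$ directly. For the first equality I would factor one copy of $\mW$ off to the right: writing $\mP\mW^2 = (\mP\mW)\mW$ and using $\mQ = \mP\mW$, we obtain $\mP\mW^2 = \mQ\mW$, hence $\hat{\vf} = \mQ\mW\vf$ as well.

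The argument is essentially bookkeeping, so there is no deep obstacle; the only point requiring care is tracking the correct power of $\mW$. The pseudospectral coefficients involve the full weights $\nu_j$ rather than their square roots, which is why $\mW^2$ (and not $\mW$) is the natural diagonal factor; the relation $\mQ = \mP\mW$ then supplies exactly the half-power of the weights needed to split $\mW^2$ and exhibit the orthogonal eigenvector matrix $\mQ$ in the first form. One could alternatively verify the identity entrywise, but the factored presentation makes transparent why the two forms coincide.
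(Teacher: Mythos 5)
Your proof is correct and follows exactly the route the paper intends: it is the direct entrywise verification of \eqref{eq:pseudocoeff} using the matrices from \eqref{eq:PW}, made explicit via $\mW^2=\mathrm{diag}([\nu_1,\dots,\nu_n])$ and the factorization $\mQ=\mP\mW$. The paper's own proof is a one-line appeal to this same computation, so your writeup simply fills in the bookkeeping the authors left implicit.
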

 
\begin{proof}
This is easily verified by equation \eqref{eq:pseudocoeff} using the matrices defined in \eqref{eq:PW}. 
\end{proof}

\begin{lemma}
\label{lem:cpequiv}
The pseudospectral approximation $\sU_p^n(f)$ is equal to the spectral collocation approximation $\sU_l^n(f)$ for all
$s\in\sS$.
\end{lemma}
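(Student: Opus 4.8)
The plan is to exploit the fact that both $\sU_p^n(f)$ and $\sU_l^n(f)$ are polynomials of degree at most $n-1$: the collocation approximation is the unique degree-$(n-1)$ interpolant through $n$ nodes, while the pseudospectral approximation $\sum_{i=1}^n \hat{f}_i\pi_i(s)$ is a combination of the $\pi_i$, whose highest degree is $n-1$. Both therefore lie in the $n$-dimensional space of polynomials of degree at most $n-1$, and such a polynomial is completely determined by its values at any $n$ distinct points. I would take those points to be the Gaussian quadrature nodes $\lambda_1,\dots,\lambda_n$, which are distinct, so that it suffices to show the two approximations agree at each $\lambda_k$; uniqueness of the interpolating polynomial then forces them to coincide for all $s\in\sS$.

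For the collocation approximation the agreement at the nodes is immediate from the cardinal property $\ell_i(\lambda_k)=\delta_{ik}$ of \eqref{eq:lagrange}, giving $\sU_l^n(f)(\lambda_k)=\sum_{i=1}^n f(\lambda_i)\ell_i(\lambda_k)=f(\lambda_k)$. The substance of the argument is to show that the pseudospectral approximation likewise interpolates $f$ at the nodes. Using Lemma~\ref{lem:dft} to write $\hat{\vf}=\mP\mW^2\vf$, and observing that $\bpi(\lambda_k)$ is exactly the $k$th column of $\mP$ since $\mP(i,j)=\pi_i(\lambda_j)$, I would evaluate
\begin{equation}
\sU_p^n(f)(\lambda_k)=\hat{\vf}^T\bpi(\lambda_k)=\vf^T\mW^2\mP^T\mP\,\ve_k .
\end{equation}

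The key step, which is really the content of the lemma, is the discrete orthonormality $\mP^T\mP=\mW^{-2}$. This is equivalent to the statement that $\mQ=\mP\mW$ is an orthogonal matrix, as recorded after \eqref{eq:PW}; it is in turn the exactness of the $n$-point Gaussian rule on the products $\pi_i\pi_{i'}$, whose degree is at most $2n-2$ and so within the $2n-1$ exactness of the rule. Substituting $\mW^2\mP^T\mP=I$ collapses the expression to $\sU_p^n(f)(\lambda_k)=\vf^T\ve_k=f(\lambda_k)$, so the pseudospectral polynomial interpolates $f$ at every node. Having shown both degree-$(n-1)$ polynomials take the value $f(\lambda_k)$ at the $n$ distinct nodes, I conclude by uniqueness of polynomial interpolation that $\sU_p^n(f)=\sU_l^n(f)$ for all $s\in\sS$. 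I expect the only delicate points to be the bookkeeping identification $\bpi(\lambda_k)=\mP\ve_k$ and the correct placement of the $\mW$ factors; once the orthogonality $\mQ^T\mQ=I$ is invoked, the result follows directly.
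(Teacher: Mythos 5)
Your proposal is correct, but it takes a genuinely different route from the paper's. The paper proves the identity directly, as a change of basis valid for all $s$ at once: by uniqueness of interpolation, each $\pi_i$ (degree at most $n-1$) is reproduced exactly by the Lagrange cardinal functions, which gives the vector identity $\mP\vl(s)=\bpi(s)$; inverting via $\mP=\mQ\mW^{-1}$ and the orthogonality $\mQ^{-1}=\mQ^T$ yields $\vl(s)=\mW\mQ^T\bpi(s)$, and substituting into $\sU_l^n(f)=\vf^T\vl(s)$ together with Lemma~\ref{lem:dft} collapses the expression to $\hat{\vf}^T\bpi(s)=\sU_p^n(f)$ in two lines. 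You instead invoke uniqueness at the \emph{end}: both approximations lie in the $n$-dimensional space of polynomials of degree at most $n-1$, and you verify agreement at the $n$ distinct Gaussian nodes, the substantive step being the discrete orthonormality $\mW^2\mP^T\mP=I$, which forces $\sU_p^n(f)(\lambda_k)=\vf^T\ve_k=f(\lambda_k)$. The two arguments consume the same ingredients (Lemma~\ref{lem:dft}, orthogonality of $\mQ$, uniqueness of degree-$(n-1)$ interpolation) but in reversed logical order: the paper deduces the interpolation property of $\sU_p^n(f)$ as a consequence of the lemma (see the sentence immediately following its proof), whereas you establish that property first and derive the lemma from it. What your route buys is transparency about where Gaussian exactness enters -- it makes plain why the equivalence breaks for non-Gaussian rules or when the series is truncated below $n$ terms, exactly the failure modes the paper discusses after the lemma; what the paper's route buys is brevity and an explicit basis-transformation identity that is reused in the tensor-product extension via the mixed product property of Kronecker products. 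One small nuance in your justification: exactness of the $n$-point rule on the products $\pi_i\pi_{i'}$ (degree at most $2n-2\leq 2n-1$) directly gives $\mP\mW^2\mP^T=I$, i.e.\ $\mQ\mQ^T=I$, whereas the identity you need, $\mP^T\mP=\mW^{-2}$, is the transposed statement $\mQ^T\mQ=I$; the two are equivalent only because $\mQ$ is square. This is harmless here, since the paper already records that $\mQ$ is the orthogonal eigenvector matrix of the symmetric Jacobi matrix $\mJ$, but it deserves a half-sentence if you want the exactness argument to stand on its own.
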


\begin{proof}
By the uniqueness of the Lagrange polynomial interpolation, we can write $\mP\vl(s) = \bpi(s)$. Since $\mP=\mQ\mW^{-1}$,
we have $\vl(s)=\mW\mQ^T\bpi(s)$. Then
\begin{align*}
\sU_l^n(f) &= \vf^T\vl(s)\\
&= \vf^T\mW\mQ^T\bpi(s)\\
&= \hat{\vf}^T\bpi(s)\\
&= \sU_p^n(f),
\end{align*}
as required.
\end{proof}

Lemma \ref{lem:cpequiv} implies that the pseudospectral approximation $\sU_p^n(f)$ interpolates $f(s)$ at
the Gaussian quadrature points. However, the equivalence expressed in Lemma \ref{lem:cpequiv} breaks down in two
important cases. When the number of terms in the orthogonal series is less than the number of points in the quadrature
rules, the orthogonal series representation no longer produces the same polynomial as the Lagrange interpolant. Also,
if a quadrature rule that is not the Gaussian quadrature rule is used to approximate the Fourier coefficients, then the
discrete Fourier transform from Lemma \ref{lem:dft} is no longer valid. The latter situation may occur if an alternative
quadrature rule holds practical advantages over the Gaussian quadrature rule. 

\begin{remark}
We have restricted our attention to orthonormal polynomials and Gaussian quadrature rules for a given weight function.
However, transformations similar to Lemma \ref{lem:dft} apply for Chebyshev polynomials and Clenshaw-Curtis quadrature
rules using a fast Fourier transform. For an insightful discussion of the comparisions between these methods of
integration and approximation, see~\cite{Trefethen08}.
\end{remark}

\subsection{Tensor Product Extensions}

When $d>1$, the above concepts extend directly via a tensor product construction. For a given multi-index
$\vn=(n_1,\dots,n_d)\in\mathbb{N}^d$, it is convenient to define the set of multi-indices 
\begin{equation}
\sI_{\vn} = \{\vi \;:\; \vi\in\mathbb{N}^d,\, 1\leq i_k \leq n_k,\, k=1,\dots,d\}.
\end{equation}
We can use this index set to reference components of the tensor product approximations.

Tensor product Gaussian quadrature rules are constructed by taking cross products of univariate Gaussian
quadrature rules. For multi-index $\vn$, let $G_{\vn}$ be the set of $d$-variate Gaussian quadrature points, 
\begin{equation}
G_{\vn} \;=\; \{\lambda_{\vi}=(\lambda_{i_1},\dots,\lambda_{i_d}) \;:\; \vi\in\sI_{\vn} \},
\end{equation}
where the points $\lambda_{i_k}$ with $i_k=1,\dots,n_k$ are the univariate quadrature points for $w_k(s_k)$. 
The associated quadrature weights $W_\vn$ are given by
\begin{equation}
W_{\vn} \;=\; \big\{\,\nu_{\vi}=\nu_{i_1}\cdots\nu_{i_d} \;:\; \vi\in\sI_{\vn} \,\big\}.
\end{equation}
In words, the tensor product quadrature weights are products of the univariate weights. To approximate the integral of
$f(s)$, compute
\begin{align*}
\int_\sS f(s)\,w(s)\,ds
&\approx
(\sU_q^{n_1}\otimes\cdots\otimes\sU_q^{n_d})(f)\\
&=
\sum_{i_1=1}^{n_1}\cdots\sum_{i_d=1}^{n_d} f(\lambda_{i_1},\dots,\lambda_{i_d})\, \nu_{i_1}\cdots\nu_{i_d}\\
&= 
\sum_{\vi\in\sI_{\vn}} f(\lambda_{\vi})\,\nu_{\vi}\\
&=
\vf_{\vn}^T\,\bnu_{\vn}
\end{align*}
where $\vf_{\vn}$ is the vector of function evaluations at the tensor grid of quadrature points, and $\bnu_{\vn}$ is a
vector of the tensor product quadrature weights.

The spectral collocation approximation on the points $G_{\vn}$ uses a basis of product-type Lagrange cardinal functions.
Define the vector of these basis polynomials by
\begin{equation}
\vl_{\vn}(s) = \vl_{n_1}(s_1) \otimes \cdots \otimes \vl_{n_d}(s_d),
\end{equation}
where $\vl_{n_k}(s_k)$ is a vector of the univariate Lagrange cardinal functions constructed on the univariate
quadrature rule defined by $\lambda_{i_k}$; see \eqref{eq:lagrange}. Then the tensor product spectral collocation
approximation for the multi-index $\vn$ is given by
\begin{align}
\label{eq:tenscoll}
f(s) 
&\approx 
(\sU_l^{n_1}\otimes\cdots\otimes\sU_l^{n_d})(f)\\ 
&= 
\sum_{i_1=1}^{n_1}\cdots\sum_{i_d=1}^{n_d} f(\lambda_{i_1},\dots,\lambda_{i_d})\,\ell_{i_1}(s_1)\cdots
\ell_{i_d}(s_d)\\ 
&=
\sum_{\vi\in\sI_{\vn}} f(\lambda_{\vi})\, \ell_{\vi}(s)\\ 
&=
\vf_{\vn}^T\,\vl_{\vn}(s).
\end{align}
The tensor product pseudospectral approximation uses a product type multivariate orthonormal polynomial basis, which is
simply a Kronecker product of the univariate orthonormal polynomials. For a multi-index $\vn$, let $\bpi_{n_k}(s_k)$ be
the vector of univariate polynomials that are orthonormal with respect to $w_k(s_k)$ for $k=1,\dots,d$. Then the vector
\begin{equation}
\bpi_{\vn}(s) = \bpi_{n_1}(s_1)\otimes\cdots\otimes\bpi_{n_d}(s_d)
\end{equation}
contains polynomials that are orthonormal with respect to $w(s)$; we can uniquely reference a component of the vector
$\bpi_{\vn}(s)$ by $\pi_{\vi}(s)$ with $\vi\in\sI_{\vn}$. The tensor product pseudospectral approximation for the
multi-index $\vn$ is given by
\begin{align}
f(s) 
&\approx\; 
(\sU_p^{n_1}\otimes\cdots\otimes\sU_p^{n_d})(f)\\ 
&=
\sum_{i_1=1}^{n_1}\cdots\sum_{i_d=1}^{n_d} \hat{f}_{i_1,\dots,i_d}\, \pi_{i_1}(s_1)\cdots\pi_{i_d}(s_d)\\
&=
\sum_{\vi\in\sI_{\vn}} \hat{f}_{\vi}\, \pi_{\vi}(s)\\ 
&=
\hat{\vf}_{\vn}^T\,\bpi_{\vn}(s), \label{eq:tenspseudo}
\end{align}
where $\hat{\vf}_{\vn}$ is the vector of pseudospectral coefficients
\begin{align}
\hat{f}_{\vi} 
&= 
\sum_{j_1=1}^{n_1}\cdots\sum_{j_d=1}^{n_d}
f(\lambda_{j_1},\dots,\lambda_{j_d})\,\pi_{i_1}(\lambda_{j_1})\cdots\pi_{i_d}(\lambda_{j_d})
\,\nu_{j_1}\cdots\nu_{j_d}\\ 
&=
\sum_{\vj\in\sI_{\vn}} f(\lambda_{\vj}) \,\pi_{\vi}(\lambda_{\vj})\, \nu_{\vj}.
\end{align}
The extensions of Lemmas \ref{lem:dft} and \ref{lem:cpequiv} are then straightforward. For the multi-index $\vn$,
define the matrices
\begin{equation}
\mQ = \mQ_{n_1}\otimes\cdots\otimes\mQ_{n_d}, \qquad
\mP = \mP_{n_1}\otimes\cdots\otimes\mP_{n_d}, \qquad
\mW = \mW_{n_1}\otimes\cdots\otimes\mW_{n_d}.
\end{equation}
The proofs of Lemmas \ref{lem:dft} and \ref{lem:cpequiv} hold with 
\begin{equation}
\vf=\vf_{\vn},\qquad 
\hat{\vf}=\hat{\vf}_{\vn}, \qquad
\vl(s)=\vl_{\vn}(s),\qquad
\bpi(s)=\bpi_{\vn}(s).
\end{equation}
This is easily verified by employing the mixed product property of Kronecker products. In words, we have that the
Lagrange interpolant constructed on a tensor product of Gaussian quadrature points (i.e., tensor product collocation)
produces the same polynomial approximation as a truncated Fourier expansion with a tensor product basis, where the
coefficients are computed with the tensor product Gaussian quadrature rule (i.e., tensor product pseudospectral). This
equivalence occurs when the number of quadrature points in each variable is equal to the number of univariate basis
polynomials in each variable; in other words, the number of points is the maximum degree plus one in each variable.
 
\subsection{Smolyak's Algorithm and Sparse Grids}

The inescapable challenge for tensor product approximation is the exponential increase in the work required to compute
the approximation as the dimension increases. An $n$-point quadrature rule in each of $d$ dimensions uses $n^d$ function
evaluations. Thus, tensor product approximation quickly becomes infeasible beyond a handful of dimensions. Smolyak's
algorithm~\cite{Smolyak63} attempts to alleviate this curse of dimensionality while retaining integration and
interpolation accuracy for certain classes of functions.

The majority of sparse grid applications in the literature rely on Smolyak's algorithm. The most common
derivation starts by defining a linear operation (e.g., integration, interpolation, or projection) on a univariate
function. We can generalize the notation used in \eqref{eq:quad1d}, \eqref{eq:collocation1d}, and \eqref{eq:pseudospec}
by writing the linear operation as $\sU^m(f)$. However, it is common to reinterpret the parameter $m$ in this context as
a choice for how the number of points grows as $m$ is incremented. For example, $n_m=m$ for $m>0$ would correspond
to \eqref{eq:quad1d}, \eqref{eq:collocation1d}, and \eqref{eq:pseudospec}. Another common growth relationship
is
\begin{equation}
\label{eq:growth}
n_m = 2^m-1,\quad m\geq 1. 
\end{equation}
Such a relationship is useful when the quadrature/interpolation point sets are nested, i.e., the points of the
$n$-point rule are a subset of the points in the $2n+1$ rule. This notably occurs for rules based on (i) Chebyshev
points~\cite{Gautschi04}, (ii) Gauss-Patterson\footnote{The Gauss-Patterson rules contain a specific pattern of
nesting that is not applicable for arbitrary $n$. See the reference for further details.} quadrature
formulas~\cite{Patterson68}, or (iii) equidistant points. In the case of a closed region of interpolation/integration,
one may include and reuse the endpoints of the interval in the sequence of approximations; see for example the popular
Clenshaw-Curtis integration rules~\cite{Clenshaw60}. Nested point sets can greatly increase efficiency if $f(s)$ is
very expensive.

Define $|\vm|=m_1+\cdots+m_d$. Given a univariate linear operator, Smolyak's method can be written
\begin{equation}
\sA = \sum_{\vm\in\sI}   c(\vm) \, (\sU^{m_1}\otimes\cdots\otimes\sU^{m_d}).
\end{equation} 
In the standard formulation~\cite{Barthelmann00,Novak96}, the set of admissible multi-indices $\sI$ is
\begin{equation}
\label{eq:indexset}
\sI \;=\; \big\{\, \vm\in\mathbb{N}^d \;:\; l+1\leq |\vm| \leq l+d \,\big\}
\end{equation} 
for a given level parameter $l$. In this case, the coefficients $c(\vm)$ are
\begin{equation}
\label{eq:smolyakc}
c(\vm) \;=\; (-1)^{l+d-|\vm|} \, {d-1 \choose l+d-|\vm|}.
\end{equation}
However, adaptive and anisotropic versions of Smolyak's algorithm may contain different choices for $\sI$ and $c(\vm)$;
such variations are useful if a function's variability can be primarily attributed to a subset of the inputs.
See~\cite{Nobile08-2,Gerstner98} for details on such methods.

For our purposes, it is sufficient to note that Smolyak's algorithm amounts to a linear combination of tensor product
operations. The specific tensor products are chosen so that no constituent tensor grid contains too many nodes. In the
case of nested univariate rules, a node may be common to many tensor products. In practice, one may structure the
computation to evaluate the function once per node in the union of tensor product grids -- as opposed to once per node
per tensor grid. This greatly simplifies the sparse grid integration, which can be written as a set of nodes and
weights. If a node is common to multiple constituent tensor grids, then its corresponding weight is computed as a linear
combination of the tensor grid weights; the coefficients of the linear combination are exactly $c(\vm)$. It is worth
noting that the weights of a sparse grid rule can be negative, which precludes its use as a positive definite weighted
inner product.

\section{Sparse Pseudospectral Approximation Method}
\label{sec:method}

In practice, one may wish to take advantage of the relatively small number of points in the sparse grid quadrature rule
when computing a pseudospectral approximation. This is often done by first 
truncating the Fourier series representation of $f(s)$ (see \eqref{eq:fourier}), and then approximating its spectral
coefficients with a sparse grid quadrature rule. Unfortunately, choosing the parameters of the sparse grid rule that
will accurately approximate the integral formulation of the Fourier coefficient is not straightforward. The is because
-- in constrast to tensor product approximation -- the Lagrange interpolating polynomial is \emph{not}
equivalent to a truncated pseudospectal approximation with sparse grid integration, where the number of basis
polynomials is equal to the number of points in the quadrature rule. The general wisdom has been to truncate
conservatively for a sparse grid quadrature rule constrained by a computational budget; such heuristics become
more complicated when anisotropic sparse grid rules are used.

The SPAM approaches this problem from a different perspective; it is merely the proper application of Smolyak's
algorithm to the tensor product pseudospectral projection. We take advantage of the equivalence between tensor product
pseudospectral and spectral collocation approximations to construct spectral approximations that naturally correspond
to a given sparse grid quadrature rule. In essence, since the sparse grid quadrature rule is constructed by taking
linear combinations of tensor product quadrature rules, we can take the same linear combination of tensor product
pseudospectral expansions to produce an approximation in a basis of multivariate orthogonal polynomials; a linear
combination of expansions can be easily computed by linearly combining the pseudospectral coefficients corresponding to
the same basis polynomial. Each tensor product pseudospectral expansion is simply a transformation from the Lagrange
basis using Lemma \ref{lem:dft}. In the numerical examples of Section \ref{sec:numerical}, we show compelling evidence
that this procedure is superior to directly applying the sparse grid quadrature rule to the integral formulation of the
Fourier coefficients.

More precisely, let $\sI$ and $c(\vm)$ be the admissible index set and coefficient function for a given sparse grid
quadrature rule. Then the sparse pseudospectral approximation is given by
\begin{align}
f(s) 
&\approx 
\sA_p(f)\\
&=
\sum_{\vm\in\sI} \,c(\vm)\, (\sU_p^{m_1}\otimes\cdots\otimes\sU_p^{m_d})(f)\\
&=
\sum_{\vm\in\sI} \,c(\vm)\, \hat{\vf}_{\vm}^T\,\bpi_{\vm}(s)
\end{align}
where $\hat{\vf}_{\vm}$ and $\bpi_{\vm}(s)$ are defined as in \eqref{eq:tenspseudo}. In practice, we linearly combine
the coefficients corresponding to common basis polynomials. With a slight abuse of notation, let $\{\bpi(s)\}$ be
the set of basis polynomials corresponding to a vector $\bpi(s)$; the common basis set for $\sA_p(f)$ is defined by
\begin{equation}
\label{eq:spambasis}
\Pi = \bigcup_{\vm\in\sI}\, \{\bpi_{\vm}(s)\}.
\end{equation}
Then we can write 
\begin{equation}
\label{eq:rearrange}
\sA_p(f) = \sum_{\pi(s)\in\Pi} \hat{f}_{\pi}\,\pi(s).
\end{equation}
The coefficient corresponding to $\pi(s)$ is given by
\begin{equation}
\label{eq:fpi}
\hat{f}_\pi = \sum_{\vm\in\sI} \,c(\vm)\, \hat{f}_{\vi,\vm},
\end{equation}
where
\begin{equation}
\label{eq:spamc}
\hat{f}_{\vi,\vm} =
\left\{
\begin{array}{cl}
\hat{f}_{\vi} & \mbox{ if $\pi(s)=\pi_{\vi}(s)$ with $\vi\in\sI_{\vm}$,}\\
0 & \mbox{ otherwise.}
\end{array}
\right.
\end{equation}
In words, \eqref{eq:rearrange} simply rearranges the terms in the sum so that each polynomial basis appears only once.
The next theorem allows us to apply existing analysis results for sparse grid interpolation schemes to the SPAM.

\begin{theorem}
Under the conditions of Lemma \ref{lem:cpequiv}, the sparse pseudospectral approximation $\sA_p(f)$ is point-wise
equivalent to the sparse grid interpolation approximation.
\end{theorem}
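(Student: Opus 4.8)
The plan is to show that both $\sA_p(f)$ and the sparse grid interpolant are, by construction, the \emph{same} linear combination of tensor product operators applied to $f$, and that these operators agree term by term thanks to the tensor product extension of Lemma \ref{lem:cpequiv}. First I would write the sparse grid interpolation approximation explicitly as Smolyak's algorithm applied to the univariate Lagrange interpolation operator $\sU_l$,
\begin{equation}
\sA_l(f) = \sum_{\vm\in\sI} c(\vm)\,(\sU_l^{m_1}\otimes\cdots\otimes\sU_l^{m_d})(f),
\end{equation}
using the very same admissible index set $\sI$ and coefficient function $c(\vm)$ that define $\sA_p(f)$. This is precisely the construction of the sparse interpolant as a linear combination of tensor product Lagrange polynomials, since a linear combination of polynomials is again a polynomial.

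Next, for each fixed $\vm\in\sI$ I would invoke the tensor product version of Lemma \ref{lem:cpequiv}. Under the conditions of that lemma --- namely that in each coordinate direction the number of Gaussian quadrature points equals the number of orthonormal basis polynomials --- the tensor product pseudospectral operator and the tensor product collocation operator produce the identical polynomial,
\begin{equation}
(\sU_p^{m_1}\otimes\cdots\otimes\sU_p^{m_d})(f) = (\sU_l^{m_1}\otimes\cdots\otimes\sU_l^{m_d})(f)
\end{equation}
for all $s\in\sS$. Summing these pointwise identities against the common coefficients $c(\vm)$ over the common index set $\sI$ then yields $\sA_p(f)=\sA_l(f)$ for every $s$, which is the claimed equivalence. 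The rearrangement into the common basis $\Pi$ in \eqref{eq:rearrange}--\eqref{eq:spamc} only regroups terms belonging to identical basis polynomials and therefore does not alter the value of the function, so it is cleanest to argue at the level of the operators, before that rearrangement.

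The step requiring the most care is verifying that the hypotheses of Lemma \ref{lem:cpequiv} genuinely hold for every constituent tensor product in the Smolyak sum, rather than treating the equivalence as automatic. The point is that a single growth rule $n_m$ fixes both the size of the univariate quadrature rule and the truncation of the univariate pseudospectral expansion, so that in each factor the number of points coincides with the maximum degree plus one. This is exactly the matched setting in which the univariate --- and hence, via the mixed product property of Kronecker products, the multivariate --- pseudospectral and collocation approximations coincide. Once this is granted for each $\vm\in\sI$, the remainder of the argument is pure linearity and no further estimates are needed.
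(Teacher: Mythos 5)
Your proof is correct and follows essentially the same route as the paper: both apply the tensor product version of Lemma \ref{lem:cpequiv} to each constituent tensor product operator in the Smolyak sum and then use linearity over the common index set $\sI$ with coefficients $c(\vm)$. Your additional verification that each tensor grid satisfies the matched points-equals-degree-plus-one hypothesis, and your remark that the regrouping in \eqref{eq:rearrange}--\eqref{eq:spamc} leaves the function values unchanged, are careful elaborations of steps the paper leaves implicit.
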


\begin{proof}
Using the tensor product version of Lemma \ref{lem:cpequiv}, we can write
\begin{align*}
\sA_p(f) &= \sum_{\vm\in\sI} \,c(\vm)\, \hat{\vf}_{\vm}^T\, \bpi_{\vm}(s)\\
 &= \sum_{\vm\in\sI} \,c(\vm)\, \vf_{\vm}^T\, \vl_{\vm}(s),
\end{align*}
where $\vf_{\vm}$ and $\vl_{\vm}(s)$ are defined in \eqref{eq:tenscoll}. This completes the proof.
\end{proof}

As a result of this theorem, all of the error analysis for sparse grid collocation and interpolation methods applies
directly to the sparse pseudospectral approximation. We refer the interested reader to
references~\cite{Barthelmann00,Novak96,Griebel04} for such details. Next, we prove an interesting fact about the mean
of $\sA_p(f)$.

\begin{corollary}
The mean of the sparse pseudospectral approximation $\sA_p(f)$ is equal to the mean of $f(s)$ approximated with the
associated sparse grid quadrature rule. 
\end{corollary}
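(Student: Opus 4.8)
The plan is to compute the mean $\int_\sS \sA_p(f)\,w(s)\,ds$ by integrating the pooled orthonormal expansion \eqref{eq:rearrange} term by term against the weight, observe that orthonormality annihilates every contribution except the one from the constant basis polynomial, and then identify the surviving coefficient with the sparse grid quadrature approximation of $\int_\sS f(s)\,w(s)\,ds$.

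First I would record the elementary normalization fact. Because each $w_k$ integrates to one, the degree-zero univariate orthonormal polynomial satisfies $\pi_1(s_k)\equiv 1$, so the basis function indexed by the all-ones multi-index $\mathbf 1=(1,\dots,1)$ is $\pi_{\mathbf 1}(s)=\pi_1(s_1)\cdots\pi_1(s_d)\equiv 1$. Consequently, for any $\pi\in\Pi$ we have $\int_\sS \pi(s)\,w(s)\,ds=\int_\sS \pi(s)\,\pi_{\mathbf 1}(s)\,w(s)\,ds$, which by orthonormality equals $1$ when $\pi=\pi_{\mathbf 1}$ and $0$ otherwise. Integrating \eqref{eq:rearrange} therefore collapses the mean of $\sA_p(f)$ to the single coefficient $\hat f_{\pi_{\mathbf 1}}$.

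Next I would evaluate $\hat f_{\pi_{\mathbf 1}}$ using \eqref{eq:fpi} and \eqref{eq:spamc}. Since $1\le 1\le m_k$ for every $k$, the index $\mathbf 1$ lies in $\sI_{\vm}$ for every $\vm\in\sI$; hence no term is discarded in \eqref{eq:spamc} and $\hat f_{\pi_{\mathbf 1}}=\sum_{\vm\in\sI}c(\vm)\,\hat f_{\mathbf 1,\vm}$, where each $\hat f_{\mathbf 1,\vm}$ is the tensor product pseudospectral coefficient of the constant basis on the grid $G_{\vm}$. Substituting $\pi_{\mathbf 1}\equiv 1$ into the multivariate coefficient formula following \eqref{eq:tenspseudo} gives $\hat f_{\mathbf 1,\vm}=\sum_{\vj\in\sI_{\vm}} f(\lambda_{\vj})\,\nu_{\vj}=(\sU_q^{m_1}\otimes\cdots\otimes\sU_q^{m_d})(f)$, the tensor product quadrature estimate of the integral on that grid.

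Assembling the pieces yields $\hat f_{\pi_{\mathbf 1}}=\sum_{\vm\in\sI}c(\vm)\,(\sU_q^{m_1}\otimes\cdots\otimes\sU_q^{m_d})(f)$, which is precisely Smolyak's algorithm applied to the univariate quadrature operator $\sU_q$, i.e. the associated sparse grid quadrature rule evaluated on $f$. Matching this against the mean computed in the second step completes the argument. I do not expect a genuine obstacle: the computation is short, and the only point requiring care is the bookkeeping observation that $\mathbf 1$ survives in every constituent tensor grid, so that its pooled coefficient \eqref{eq:fpi} reproduces the full combination $\sum_{\vm}c(\vm)$ defining the sparse weights---this is exactly what makes the identity hold for the sparse rule rather than for a single tensor grid.
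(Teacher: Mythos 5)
Your proposal is correct and matches the paper's own argument: both reduce the mean of $\sA_p(f)$ to the coefficient of the constant basis polynomial via orthonormality, note that the constant term appears in every constituent tensor expansion so that \eqref{eq:fpi} pools the full combination $\sum_{\vm\in\sI}c(\vm)\,\hat f_{\mathbf 1,\vm}$, and identify each $\hat f_{\mathbf 1,\vm}$ with the tensor quadrature $(\sU_q^{m_1}\otimes\cdots\otimes\sU_q^{m_d})(f)$, yielding the sparse grid rule by definition. You merely make explicit two details the paper leaves implicit --- that $\pi_{\mathbf 1}\equiv 1$ because the $w_k$ are normalized, and that $\mathbf 1\in\sI_{\vm}$ for every $\vm\in\sI$ --- which is fine.
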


\begin{proof}
By orthogonality, the mean of a polynomial expanded in an orthonormal basis is equal to the coefficient of the zero
degree term, which is 1. Define $\hat{f}_{\mathbf{1}}$ to be the coefficient of the constant term in $\sA_p(f)$. The
constant term also appears in each constituent tensor product pseudospectral approximation; denote this by
$\hat{f}_{\mathbf{1},\vm}$ for the multi-index $\vm$. Therefore, by \eqref{eq:spamc},
\begin{align*}
\hat{f}_{\mathbf{1}} &= \sum_{\vm\in\sI} \,c(\vm)\, \hat{f}_{\mathbf{1},\vm}\\
&= \sum_{\vm\in\sI} \,c(\vm)\, (\sU_q^{m_1}\otimes\cdots\otimes\sU_q^{m_d})(f),
\end{align*}
which is exactly the definition of sparse grid integration.
\end{proof}

\subsection{Discrete Orthogonality}
\label{sec:discorth}
We will see in the numerical results in the next section that -- across all test cases -- the pseudospectral
coefficients corresponding to the higher order polynomials are inaccurate when computed directly with the sparse grid
integration rule. This occurs because the higher order basis functions are not
orthonormal with respect to the sparse grid quadrature rule. However, when the integrations are performed using the
SPAM, the basis polynomials are orthonormal. This becomes apparent by looking at the SPAM coefficients for each basis
polynomial in the set $\Pi$ from \eqref{eq:spambasis}. 

\begin{theorem}
\label{thm:ortho}
Let $f(s)=\phi(s)$ for some $\phi(s)\in\Pi$ from \eqref{eq:spambasis}. Then
\begin{equation}
\hat{f}_{\pi} = 
\left\{
\begin{array}{cl}
1 & \mbox{ if $\pi(s)=\phi(s)$,}\\
0 & \mbox{ otherwise,}
\end{array}
\right.
\end{equation}
where $\hat{f}_\pi$ is from \eqref{eq:fpi}.
\end{theorem}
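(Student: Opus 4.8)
The plan is to reduce the claim to the polynomial-reproduction property of sparse grid interpolation, which is already available through the preceding theorem. First I would observe that, since $f=\phi$ is a polynomial, the tensor form of Lemma \ref{lem:cpequiv} lets me replace each constituent pseudospectral operator by the corresponding tensor interpolation operator, so that $\sA_p(\phi)=\sum_{\vm\in\sI}c(\vm)\,(\sU_l^{m_1}\otimes\cdots\otimes\sU_l^{m_d})(\phi)$ is exactly the sparse grid interpolant of $\phi$. This is just the statement of the first theorem specialized to $f=\phi$.

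Next comes the heart of the argument: show that $\sA_p(\phi)=\phi$ as a function. Because $\phi\in\Pi$, there is a multi-index $\vi^*$ and a grid $\vm_0\in\sI$ with $\phi=\pi_{\vi^*}$ and $\vi^*\in\sI_{\vm_0}$, so $\phi$ lies in the tensor polynomial space spanned by $\{\bpi_{\vm_0}(s)\}$. Sparse grid interpolation reproduces every polynomial in $\mathrm{span}(\Pi)=\sum_{\vm\in\sI}\mathrm{span}\{\bpi_{\vm}(s)\}$; since $\phi$ belongs to this space, its sparse grid interpolant is $\phi$ itself, giving $\sA_p(\phi)=\phi$.

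Finally I would compare the two representations of $\sA_p(\phi)$. By definition \eqref{eq:rearrange}, $\sA_p(\phi)=\sum_{\pi(s)\in\Pi}\hat f_\pi\,\pi(s)$, while the previous step gives $\sA_p(\phi)=\phi=\pi_{\vi^*}$. The polynomials in $\Pi$ are orthonormal with respect to $w$ and hence linearly independent, so the coefficients in the expansion of $\pi_{\vi^*}$ over $\Pi$ are unique. Matching coefficients yields $\hat f_\pi=1$ when $\pi=\phi$ and $\hat f_\pi=0$ otherwise, which is the claim. Equivalently, taking the weighted $L_2$ inner product of $\sA_p(\phi)=\phi$ against each $\pi\in\Pi$ extracts $\hat f_\pi=\langle\phi,\pi\rangle$ directly.

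The main obstacle is the reproduction step. The subtlety is that for grids $\vm$ with $\phi\notin\mathrm{span}\{\bpi_{\vm}(s)\}$ the individual interpolant $(\sU_l^{m_1}\otimes\cdots\otimes\sU_l^{m_d})(\phi)$ does not equal $\phi$, so the identity $\sA_p(\phi)=\phi$ cannot come from a single grid; it depends on the cancellation built into the coefficients $c(\vm)$ and on the index set $\sI$ being admissible (downward closed). I would justify this either by the telescoping (hierarchical difference) form of Smolyak's sum -- writing $\sA$ as a sum of tensor products of univariate difference operators $\sU_l^{m}-\sU_l^{m-1}$, each of which annihilates the low-degree polynomials already reproduced at the coarser level -- or simply by invoking the known polynomial exactness of sparse grid interpolation from the cited references. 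An alternative, self-contained route avoids interpolation altogether: expand $\hat f_\pi=\sum_{\vm\in\sI}c(\vm)\,\langle\phi,\pi\rangle_{\vm}$, where $\langle\cdot,\cdot\rangle_{\vm}$ is the tensor Gaussian quadrature inner product (with $\langle\phi,\pi\rangle_{\vm}$ set to zero when $\pi\notin\{\bpi_{\vm}(s)\}$, per \eqref{eq:spamc}), and use the per-direction exactness of Gaussian quadrature together with the same cancellation; this is more computational and again turns on the admissibility of $\sI$, so I would prefer the interpolation argument.
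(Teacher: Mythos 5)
Your proposal is correct and follows essentially the same route as the paper: the paper's proof likewise establishes $\sA_p(\phi)=\phi$ by invoking the polynomial-reproduction property of the Smolyak operator (citing Proposition 3 of~\cite{Barthelmann00}, one of the justifications you list for the reproduction step) and then concludes by the linear independence of the elements of $\Pi$, exactly as in your coefficient-matching step. The alternatives you sketch (telescoping differences, direct quadrature exactness) are sound but unnecessary, since the cited reproduction result already covers $\mathrm{span}(\Pi)=\sum_{\vm\in\sI}\mathrm{span}\{\bpi_{\vm}(s)\}$.
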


\begin{proof}
Using Proposition 3 from~\cite{Barthelmann00}, we have $\sA_p(f) = f$ for $f=\phi\in\Pi$, which implies that $\sA_p$ is
a projector for the polynomial space defined by $\mathrm{span}\,(\Pi)$. Noting that the elements of $\Pi$ are linearly
independent completes the proof.
\end{proof}

Figure \ref{fig:orth-a} numerically verifies the orthonormality of the elements of $\Pi$ using the SPAM; Figure
\ref{fig:orth-b} demonstrates the loss of orthonormality for the higher order elements of $\Pi$ for a discrete
inner product defined by the points and weights of the sparse grid integration rule.
We use $d=2$ and $l=4$, and we order the basis polynomials by their degree. Notice that some of the lower order basis
polynomials are orthonormal with respect to a discrete norm defined by the sparse grid quadrature rule. This is due to
the degree of exactness of the sparse grid quadrature rule; see~\cite{Novak99} for more details. 

\begin{figure}%
\centering
\subfloat[SPAM]{\label{fig:orth-a}
\includegraphics[scale=0.35]{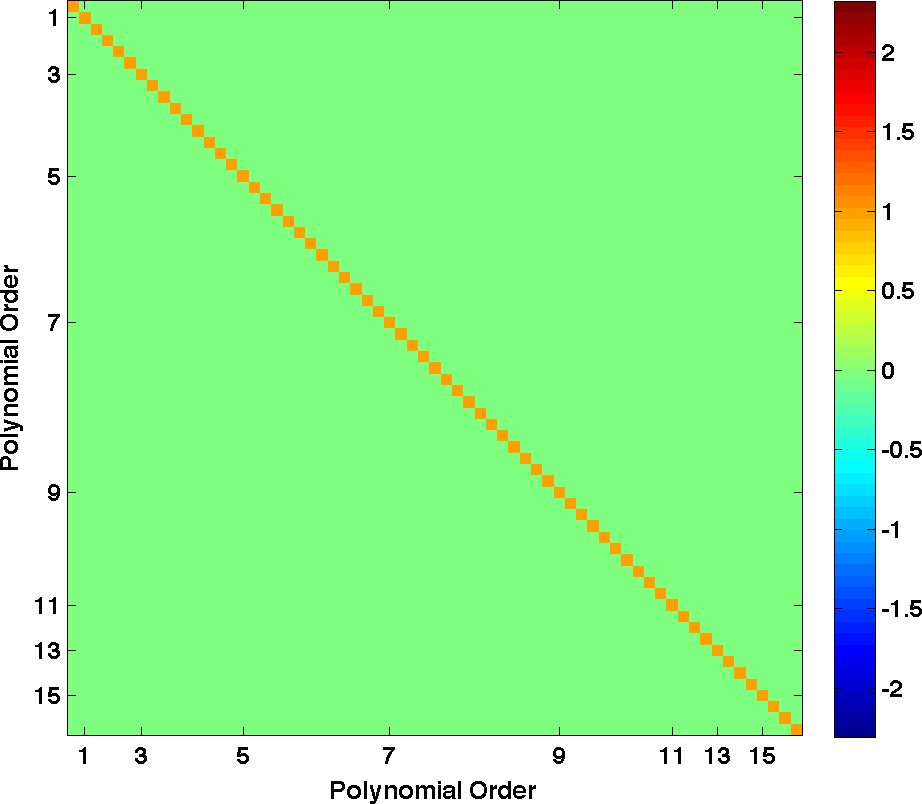}
}
\quad
\subfloat[Sparse grid integration]{\label{fig:orth-b}
\includegraphics[scale=0.35]{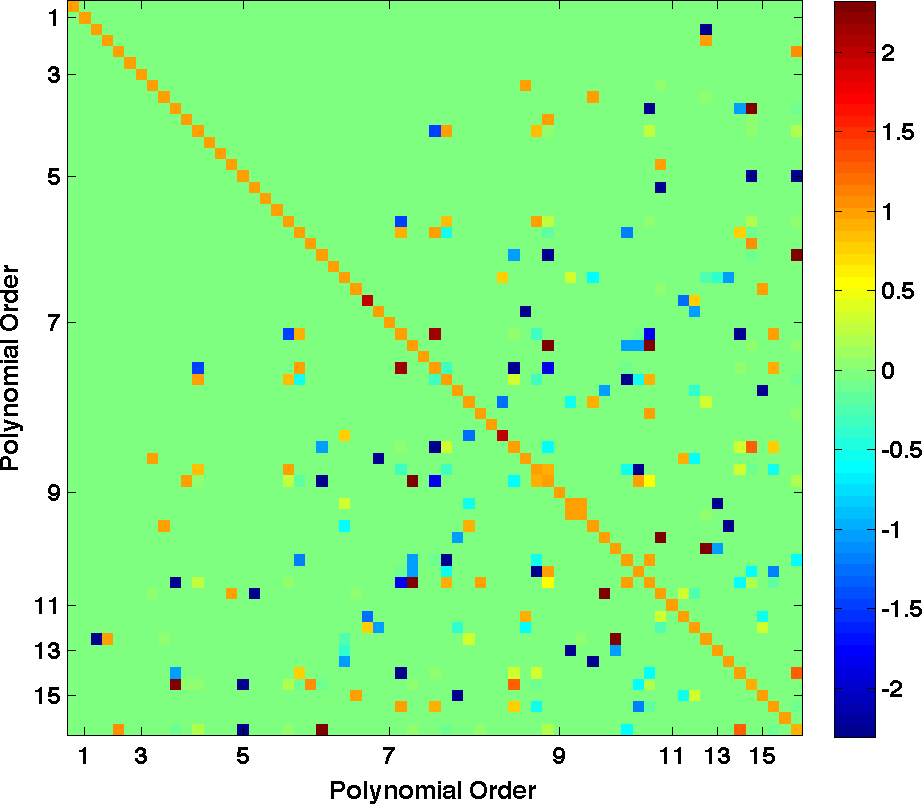}
}
\caption{Orthonormality of the elements in $\Pi$ from \eqref{eq:spambasis} using SPAM or directly approximated with the
sparse grid integration rule with dimension $d=2$ and level $l=4$. The sparse grid was built from univariate
Gauss-Legendre quadrature rules with growth rule \eqref{eq:growth}.}
\end{figure}

\section{Numerical Experiments}
\label{sec:numerical}

In the following numerical experiments, we compare the coefficients computed with the SPAM to direct approximation of
the Fourier coefficients with the corresponding sparse grid quadrature rule. To make the comparison fair, we apply the
sparse grid rule directly to each coefficient corresponding to the basis set \eqref{eq:spambasis} for the sparse
pseudospectral approximation. We construct each sparse grid rule using (i) univariate non-nested Gauss-Legendre
quadrature points for a uniform weight function on the square $[-1,1]^2$, (ii) $n_m$ defined as in
\eqref{eq:growth}, and (iii) $\sI$ and $c(\vm)$ defined as in \eqref{eq:indexset} and \eqref{eq:smolyakc}. The choice
of the uniform weight function implies the $\pi_i(s)$ are the normalized Legendre polynomials for the pseudospectral
approximation. For all experiments, we compute the largest feasible tensor product pseudospectral approximation
and call the resulting coefficients the truth. In all cases, the apparent decay in the tensor product pseudospectral
coefficients assures us that we have used a sufficiently high order approximation to bestow the title \emph{truth}. 

\subsection{Five Bivariate functions}
In the first experiment, we compare both methods on five bivariate functions; see Table \ref{tab:testfuncs}. For each
function, we compute a tensor product pseudospectral approximation of order 255 in each variable -- 65,536 total
quadrature points. We plot the log of the magnitude of the pseudospectral coefficients with a surface plot to visually
observe their decay. We then plot the log of the magnitude of the sparse pseudospectral coefficients corresponding to a
level $l=7$ sparse grid compared to the same sparse grid approximation of the Fourier coefficients.

\begin{table}
\begin{center}
\begin{tabular}{|c|c|}
\hline
\# & $f(s_1,s_2)$\\
\hline
1 &$s_1^{10}s_2^{10}$\\
2 &$e^{s_1+s_2}$\\
3 &$\sin(5(s_1-0.5)) + \cos(3(s_2-1))$\\
4 &$1/(2+16(s_1-0.1)^2+25(s_2+0.1)^2)$\\
5 &$(|s_1-0.2|+|s_2+0.2|)^3$\\
\hline
\end{tabular}
\end{center}
\caption{The five bivariate test functions.}
\label{tab:testfuncs}
\end{table}

With a level 7 grid, sparse pseudospectral approximation contains a maximum univariate degree of 129. For each test
function, the corresponding set of figures contains
\begin{enumerate}
  \item[(i)] the tensor product pseudospectral coefficients up to maximum univariate degree 100,
  \item[(ii)] the SPAM coefficients up to maximum univariate degree 100,
  \item[(iii)] the sparse grid integration approximation of the Fourier coefficients up to maximum univariate degree
  100,
  \item[(iv)] a summary plot with coefficients up to univariate degree 129 ordered by total order.
\end{enumerate}
As a general comment, we see that the sparse grid integration produces largely incorrect values for coefficients
associated with higher degree polynomials. More specific comments for the individual test functions are as follows:
\begin{enumerate}
  \item $s_1^{10}s_2^{10}$: This function evaluates the performance of the methods on a monomial. We know that
  coefficients associated with polynomials of degree greater than 10 in either $s_1$ or $s_2$ should be zero by
  orthogonality. Additionally, since the monomial is an even function over the domain with a symmetric weight function,
  the coefficients corresponding to odd degree polynomials in either variable ought to be zero. This is verified in the
  tensor product pseudospectral coefficients and respected by the SPAM coefficients. However, the direct sparse
  integration produces non-zero values for coefficients that should be zero. See Figure \ref{fig:bv1}.
  \item $e^{s_1+s_2}$: This function is analytic in both variables with rapid decay of the Fourier coefficients. Observe
  that the direct sparse integration yields large values for coefficients corresponding to the higher order polynomials.
  See Figure \ref{fig:bv2}.
  \item $\sin(5(s_1-0.5)) + \cos(3(s_2-1))$: In the language of the ANOVA decomposition~\cite{Liu06}, this function has
  only main effects. Thus, the Fourier coefficients for polynomials with mixed terms corresponding to
  interaction effects should be zero. Again, this is apparent in the tensor product pseudospectral coefficients, and it
  is respected by the SPAM coefficients. The direct sparse integration, however, produces non-zero values for
  coefficients of the mixed polynomials; see Figure \ref{fig:bv3}.
  \item $1/(2+16(s_1-0.1)^2+25(s_2+0.1)^2)$: The pseudospectral coefficients of this rational function decay relatively
  slowly; notice it needs up to degree 40 polynomials in each variable to reach numerical precision, according to the
  tensor product pseudospectral coefficients. The SPAM coefficients do a much better job capturing the true decay of the
  Fourier coefficients than the direct integration method, which does not appear to decay at all.
  See Figure \ref{fig:bv4}.
  \item $(|s_1-0.2|+|s_2+0.2|)^3$: This function has discontinuous first derivatives, so we expect only first order
  algebraic convergence of its Fourier coefficients; on a log scale they decay very slowly. However, the
  interaction effects disappear after degree three in either variable. Again, this is visible in the tensor product
  pseudospectral coefficients and respected by the SPAM coefficients, but the direct sparse grid integration produces
  non-zero values for coefficients that ought to be zero. See Figure \ref{fig:bv5}.
\end{enumerate}
In general, we find that the SPAM coefficients are significantly more accurate than the direct application of the sparse
grid integration rules to the Fourier coefficients. This observation is somewhat counterintuitive. One may expect that
the sparse grid rule, by evaluating the product of the function times the basis polynomial at more locations, would yield a
more accurate approximation. But this is clearly not the case for these examples. The decreased accuracy in the 
coefficients computed with the sparse grid integration rule is a result of the nonorthogonality of the basis polynomials
with respect to a discrete inner product defined by the sparse grid integration rule; see Section \ref{sec:discorth}.

In Figure \ref{fig:err-c}, we plot the
decay of the truncation error for the sparse approximations as the level increases. We approximate the truncation error
by the sum of squares of the neglected coefficients from the tensor product expansion. Since both approximations use
the same basis sets, this approximate truncation error is identical. In Figures \ref{fig:err-a} and \ref{fig:err-b} we
plot the decay in the error of the approximated coefficients as the level increases; the error in the coefficients is
squared and summed. We see that the error in the SPAM coefficients decays roughly like the truncation error, while the
error in the direct sparse grid integration does not decay. Of course, this summary plot ignores what is most visible
in Figures \ref{fig:bv1}-\ref{fig:bv5}, which is that some of the coefficients associated with lower order polynomials
may be approximated well; it is the coefficients of the higher order terms that contain most of the error.

\begin{figure}%
\centering
\subfloat[Tensor $100\times 100$]{
\includegraphics[scale=0.35]{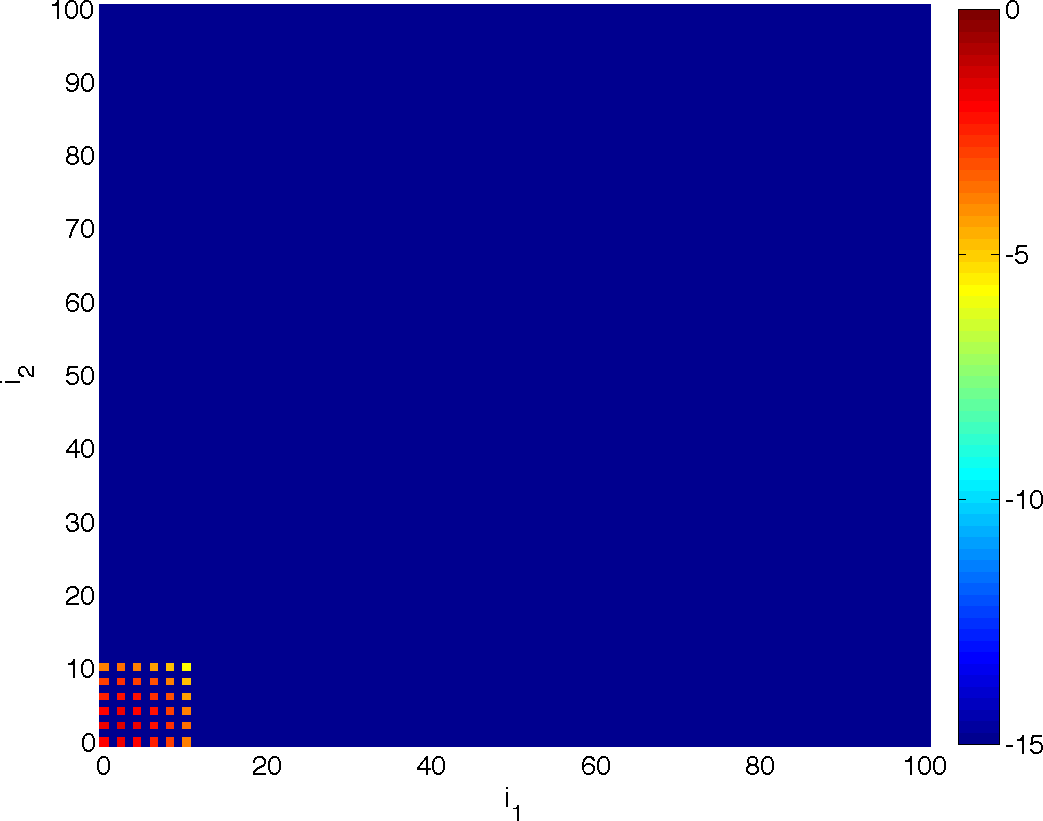}
}
\quad
\subfloat[Coefficient decay]{
\includegraphics[scale=0.35]{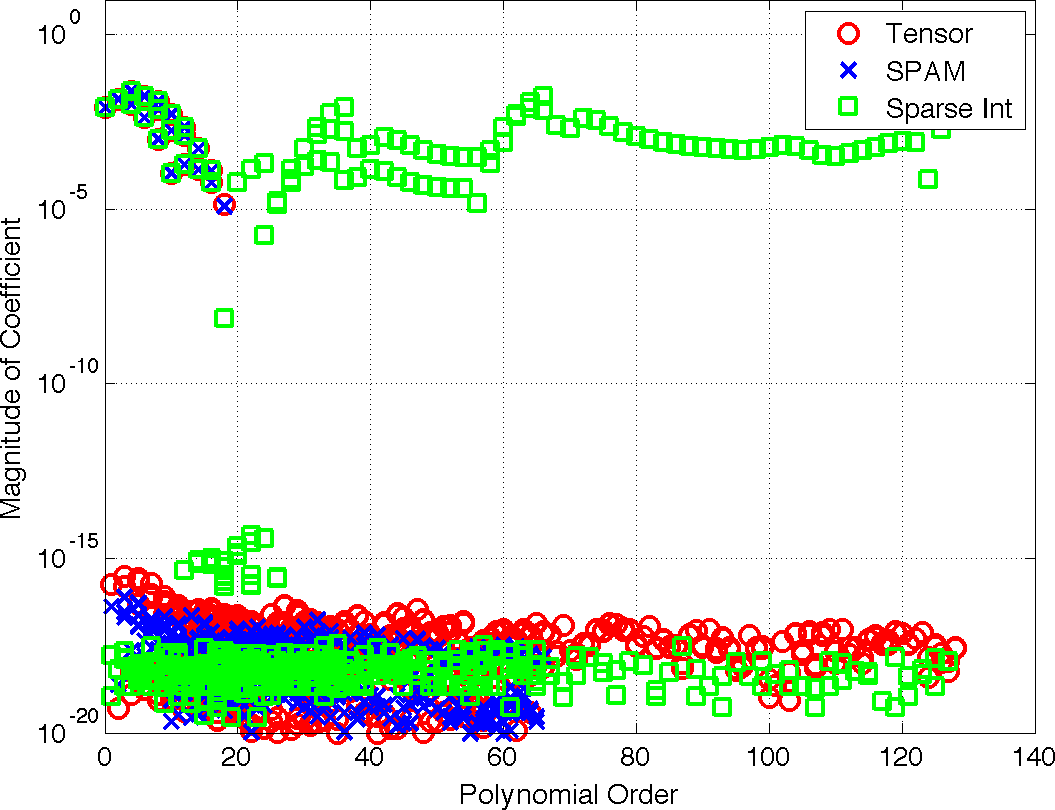}
}\\
\subfloat[SPAM]{
\includegraphics[scale=0.35]{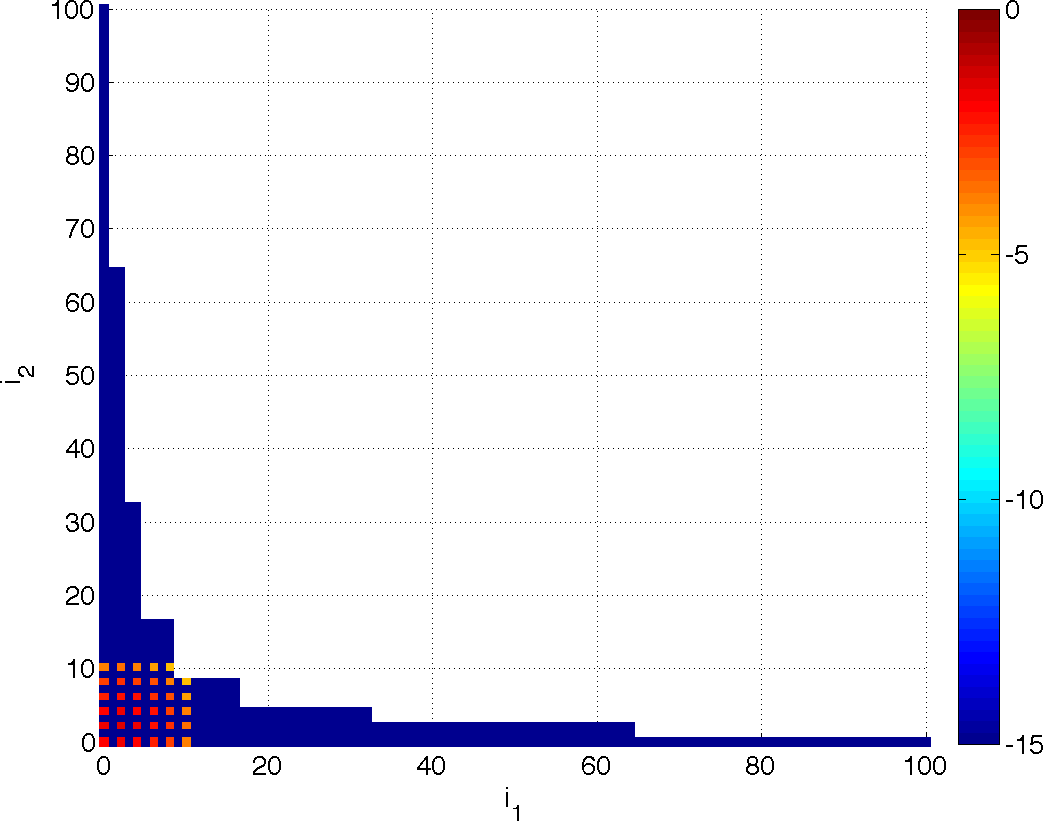}
}
\quad
\subfloat[Sparse grid integration]{
\includegraphics[scale=0.35]{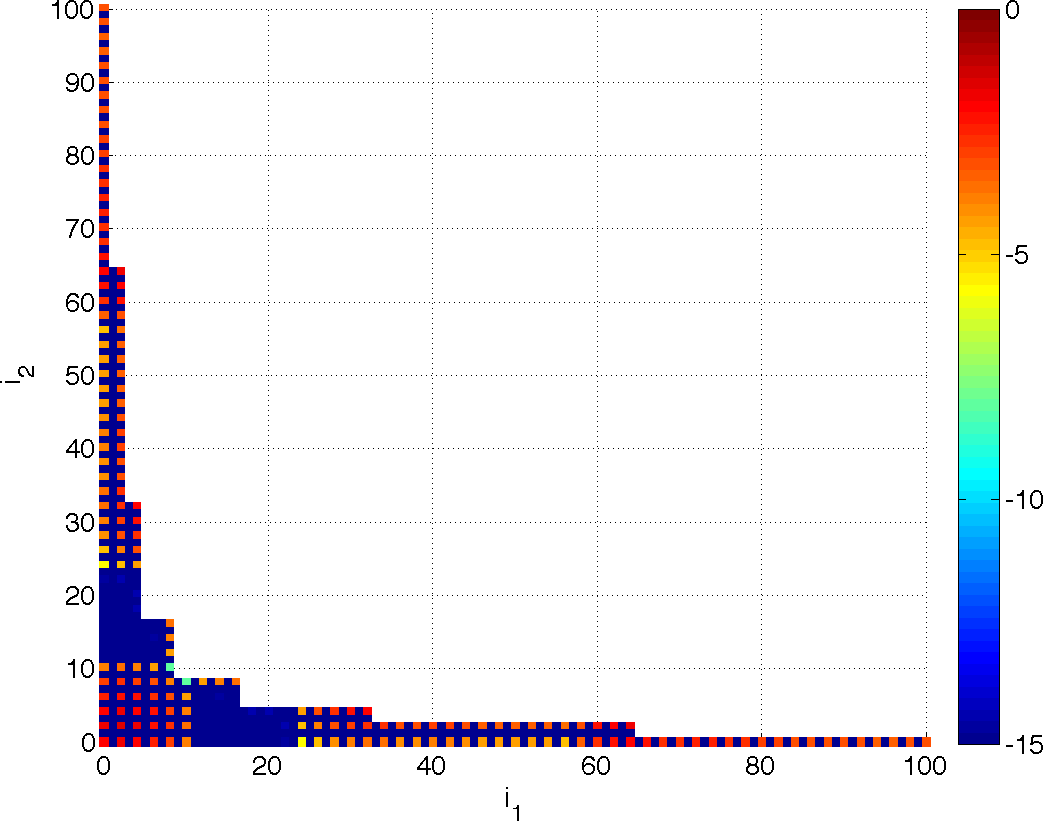}
}
\caption{Fourier coefficient approximations for $s_1^{10}s_2^{10}$.}
\label{fig:bv1}
\end{figure}

\begin{figure}%
\centering
\subfloat[Tensor $100\times 100$]{
\includegraphics[scale=0.35]{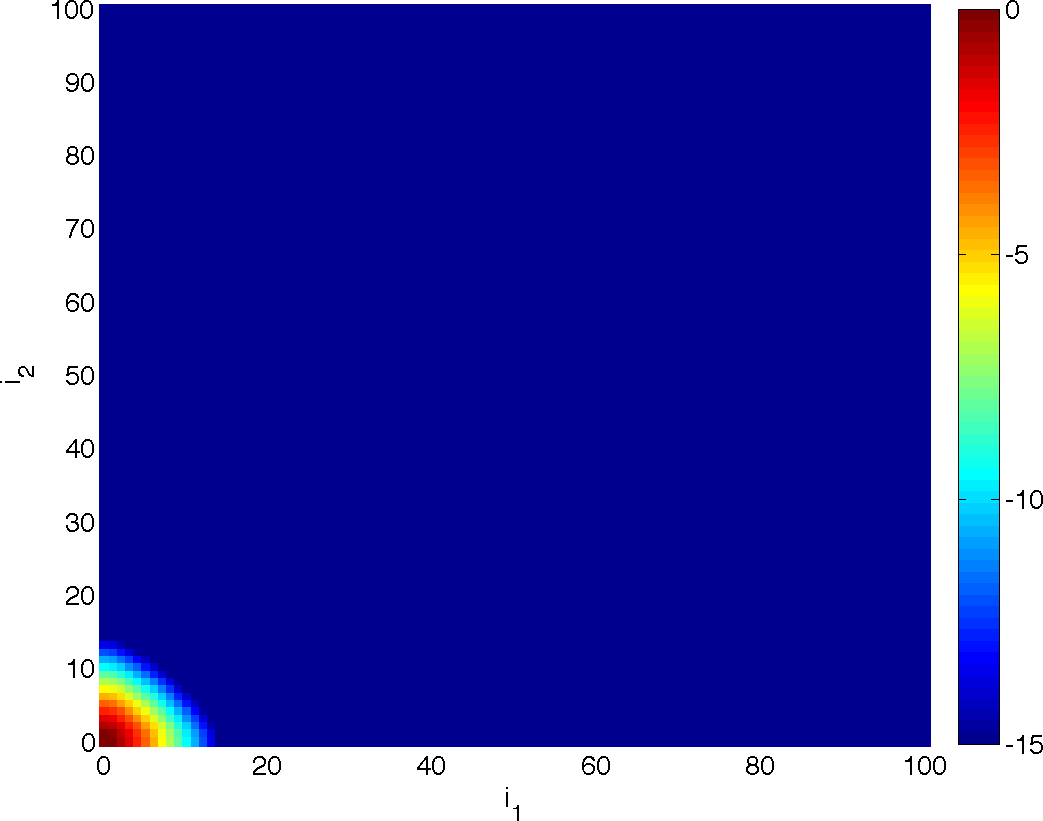}
}
\quad
\subfloat[Coefficient decay]{
\includegraphics[scale=0.35]{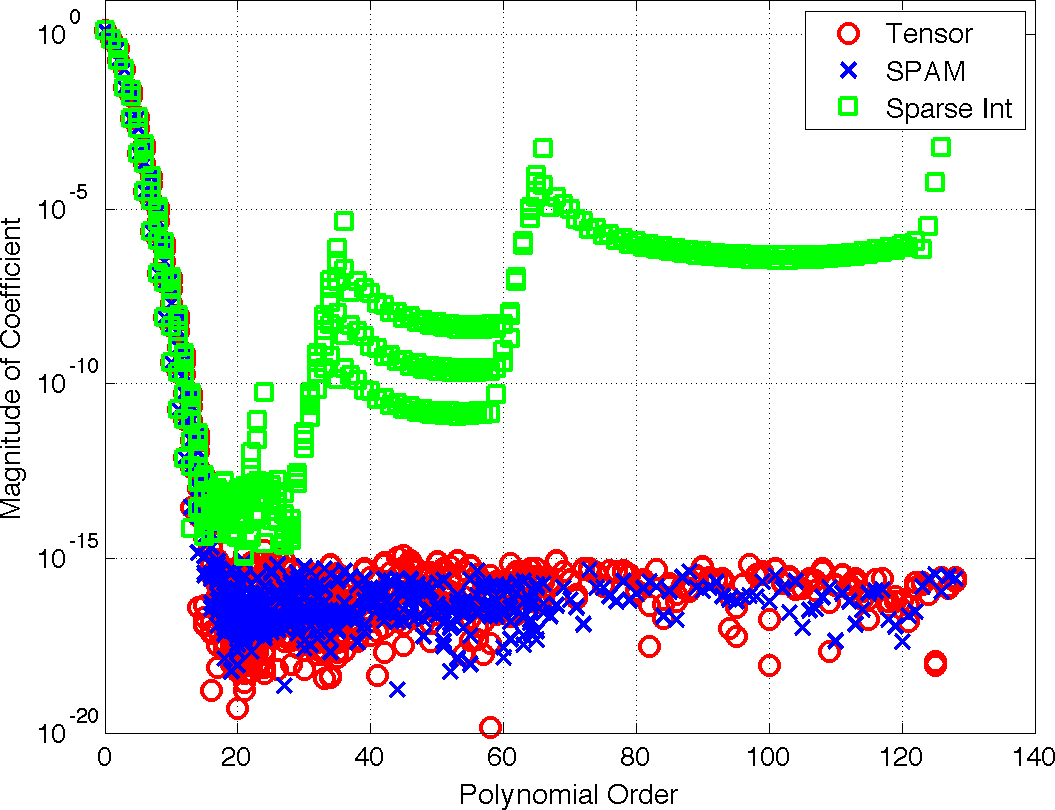}
}\\
\subfloat[SPAM]{
\includegraphics[scale=0.35]{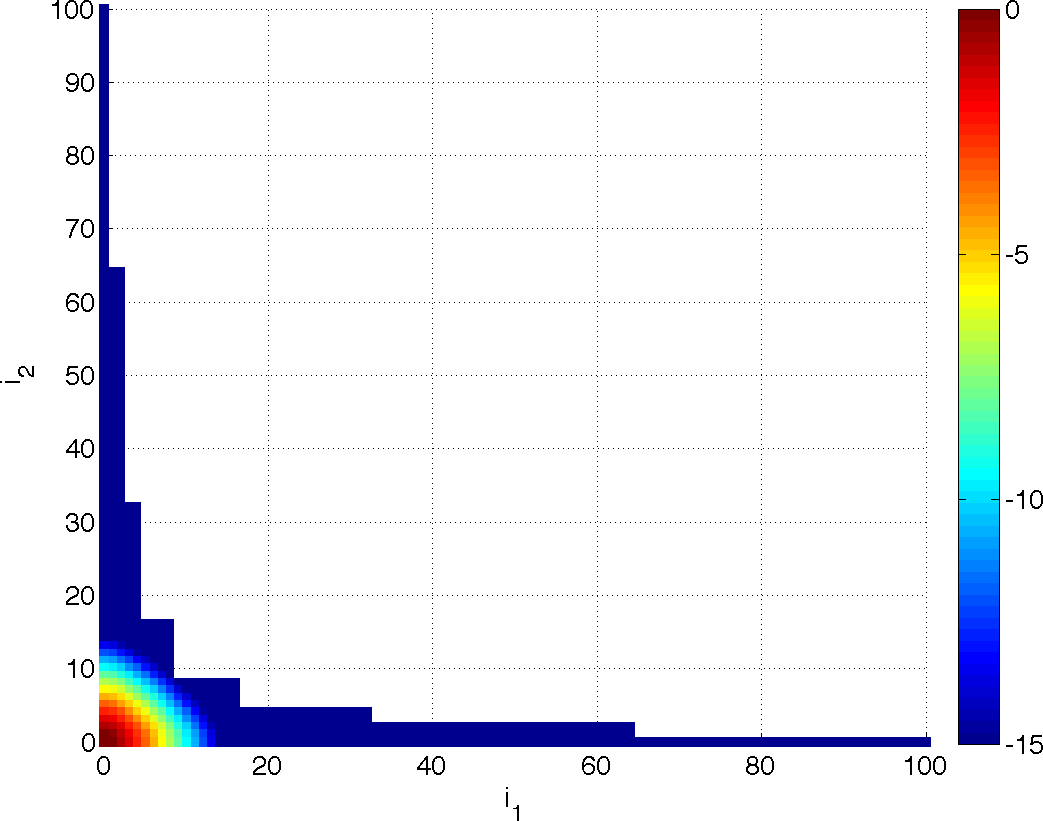}
}
\quad
\subfloat[Sparse grid integration]{
\includegraphics[scale=0.35]{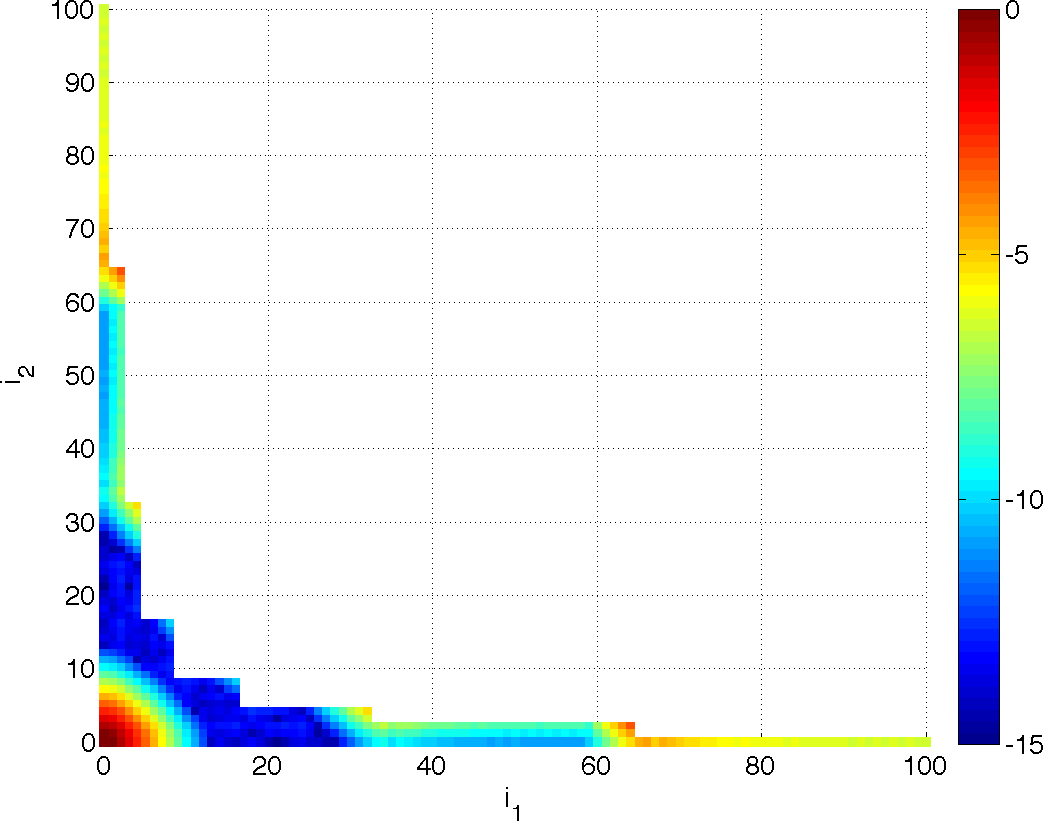}
}
\caption{Fourier coefficient approximations for $e^{s_1+s_2}$.}
\label{fig:bv2}
\end{figure}

\begin{figure}%
\centering
\subfloat[Tensor $100\times 100$]{
\includegraphics[scale=0.35]{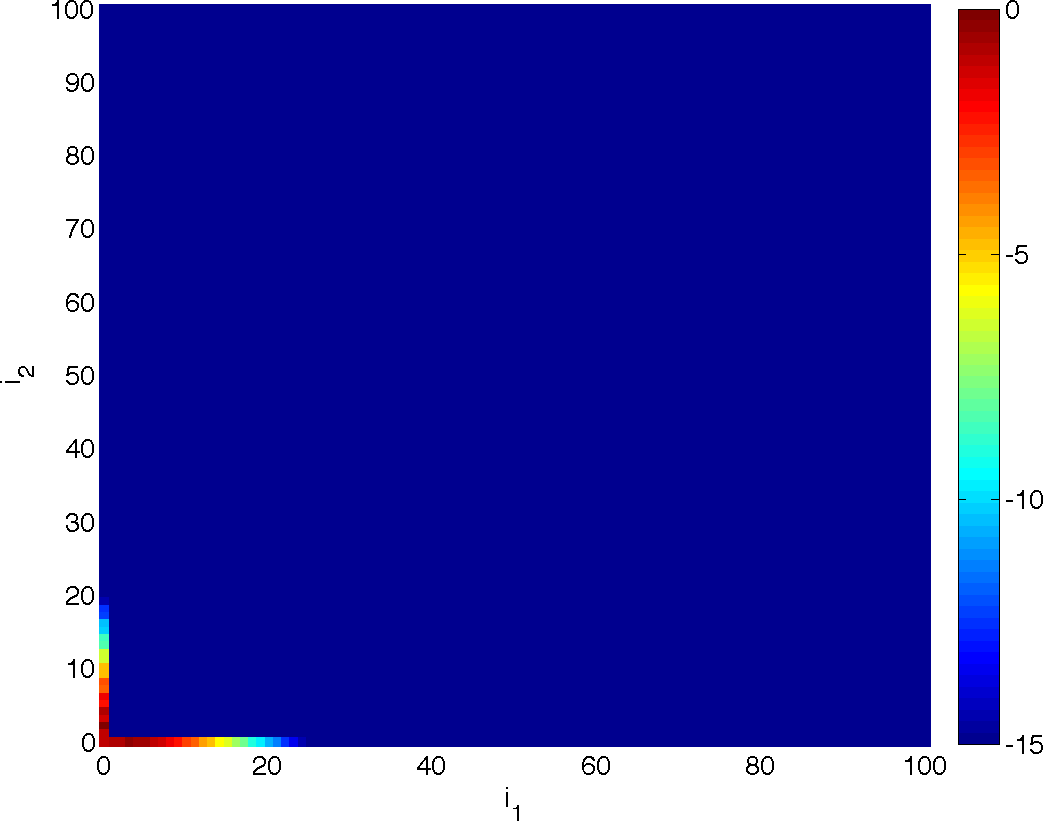}
}
\quad
\subfloat[Coefficient decay]{
\includegraphics[scale=0.35]{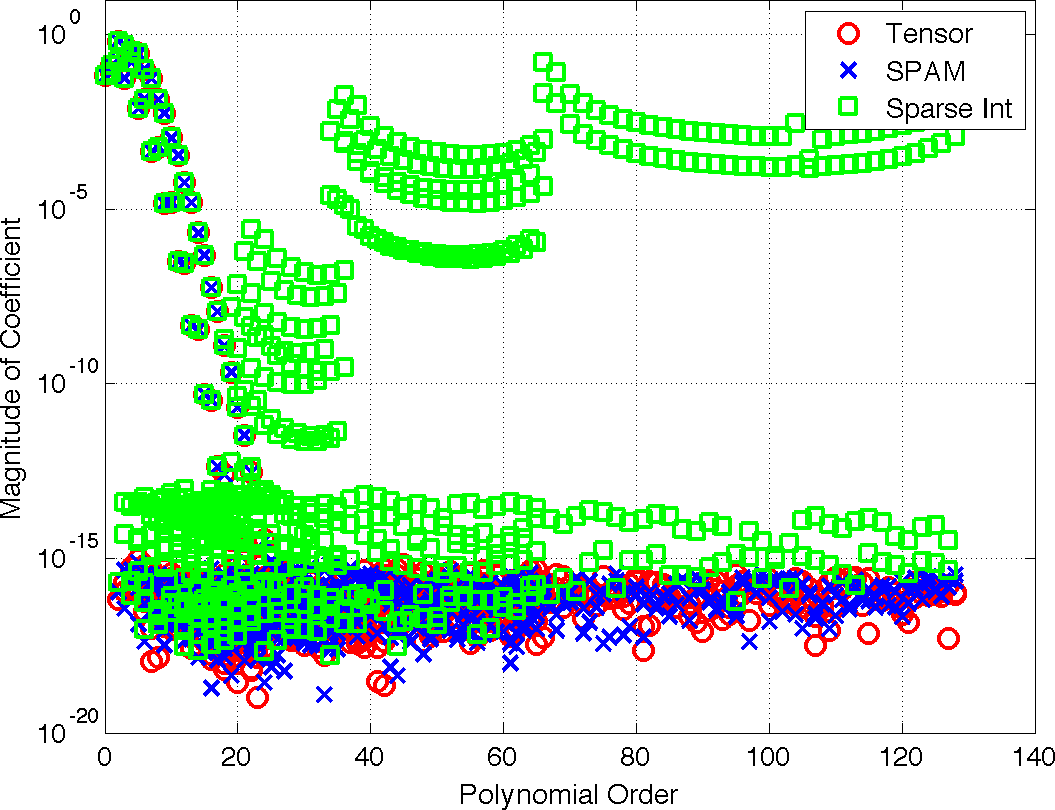}
}\\
\subfloat[SPAM]{
\includegraphics[scale=0.35]{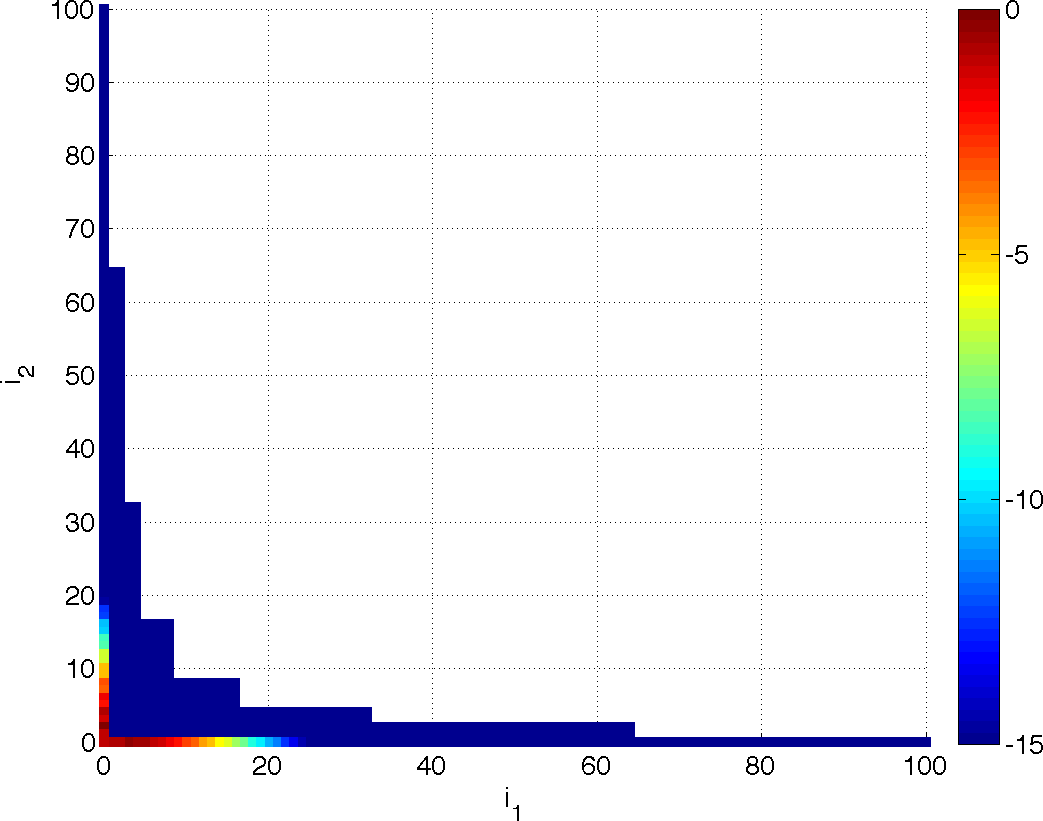}
}
\quad
\subfloat[Sparse grid integration]{
\includegraphics[scale=0.35]{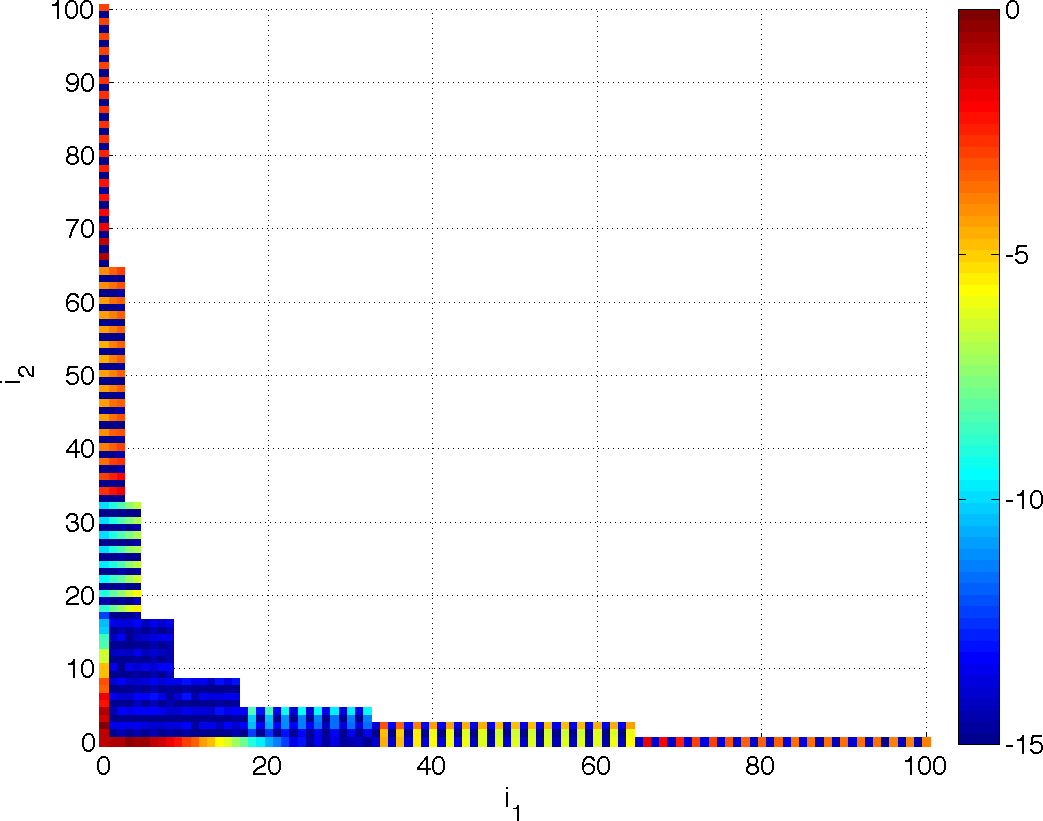}
}
\caption{Fourier coefficient approximations for $\sin(5(s_1-0.5)) + \cos(3(s_2-1))$.}
\label{fig:bv3}
\end{figure}

\begin{figure}%
\centering
\subfloat[Tensor $100\times 100$]{
\includegraphics[scale=0.35]{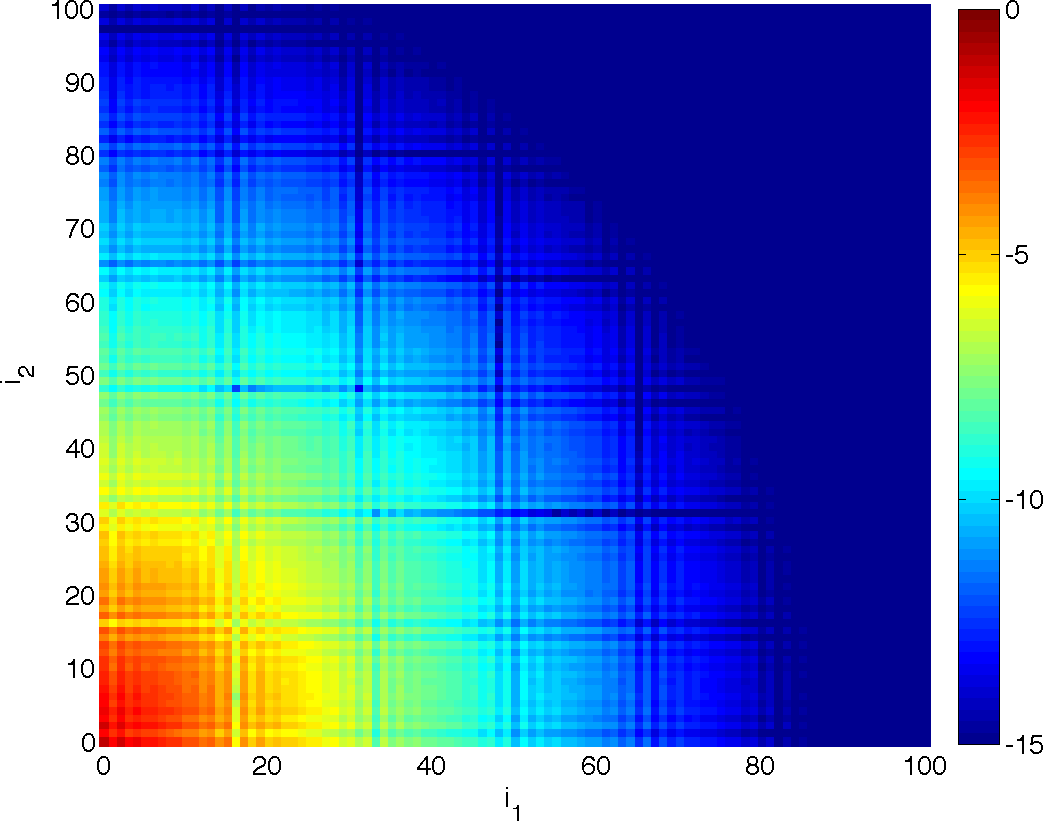}
}
\quad
\subfloat[Coefficient decay]{
\includegraphics[scale=0.35]{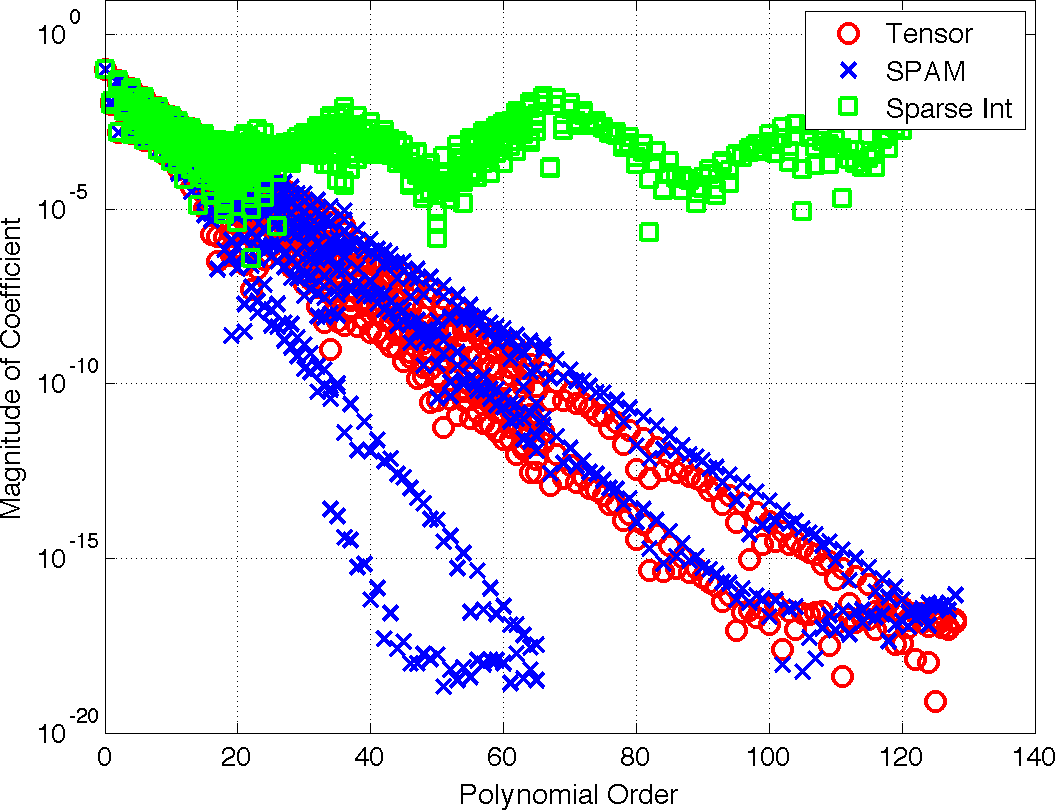}
}\\
\subfloat[SPAM]{
\includegraphics[scale=0.35]{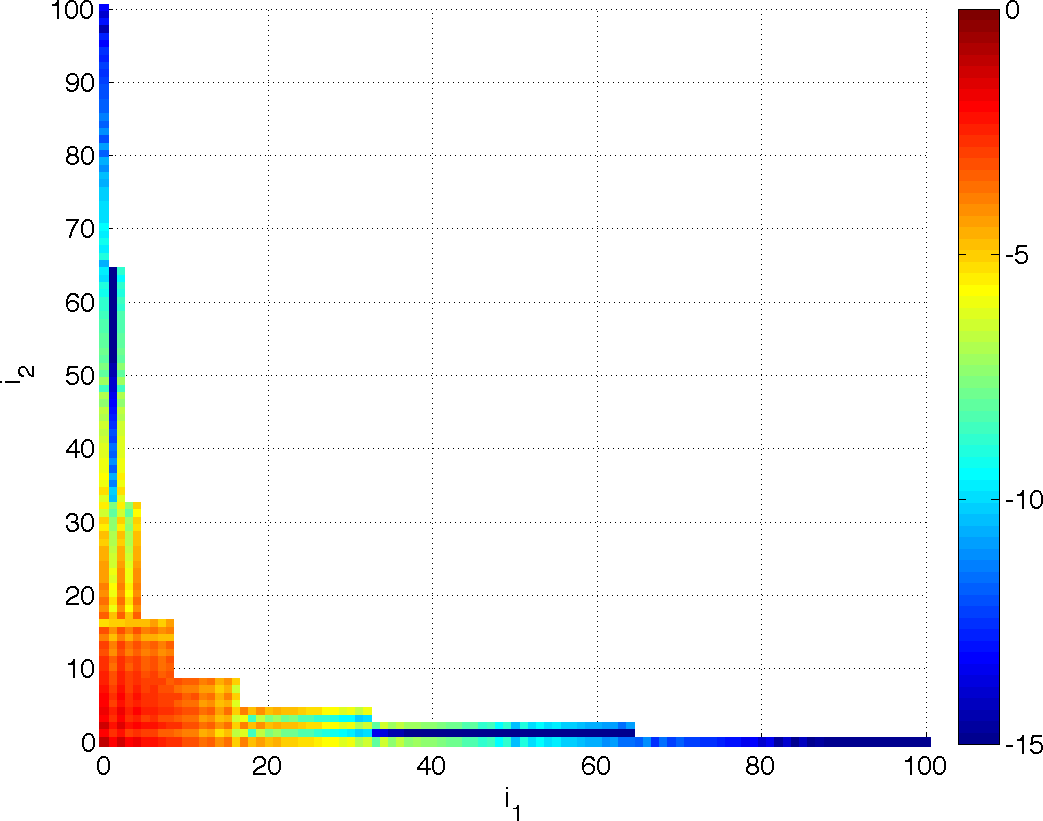}
}
\quad
\subfloat[Sparse grid integration]{
\includegraphics[scale=0.35]{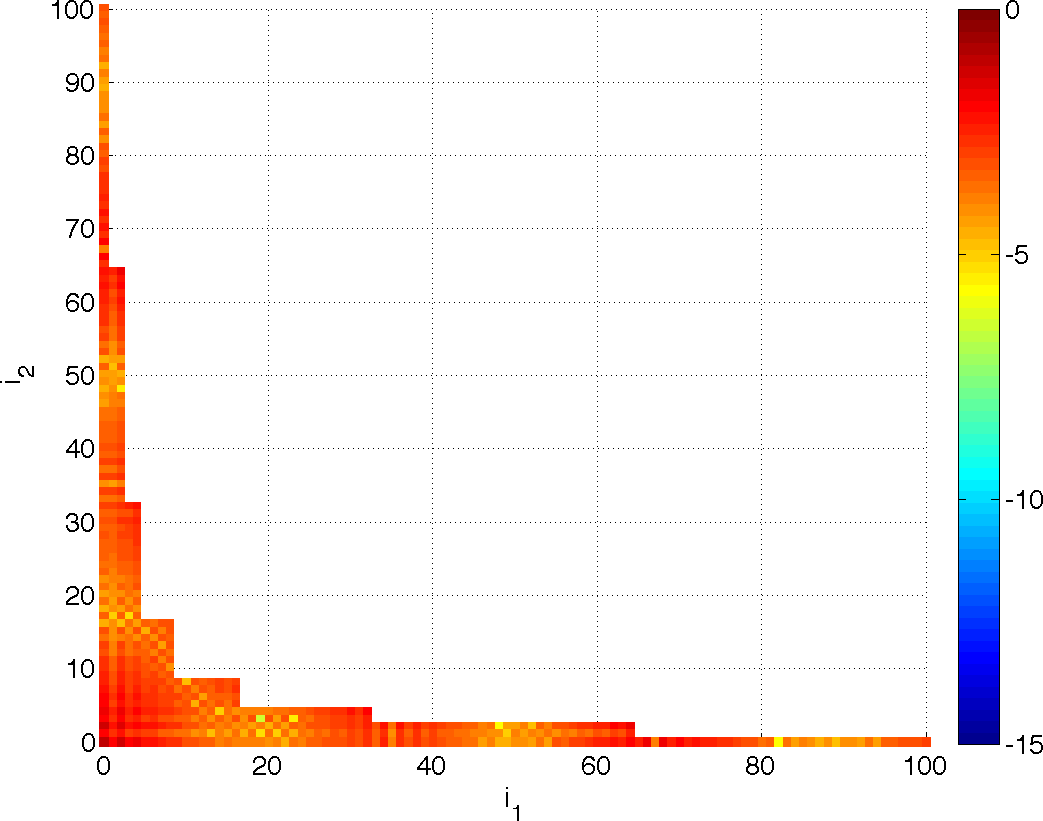}
}
\caption{Fourier coefficient approximations for $1/(2+16(s_1-0.1)^2+25(s_2+0.1)^2)$.}
\label{fig:bv4}
\end{figure}

\begin{figure}%
\centering
\subfloat[Tensor $100\times 100$]{
\includegraphics[scale=0.35]{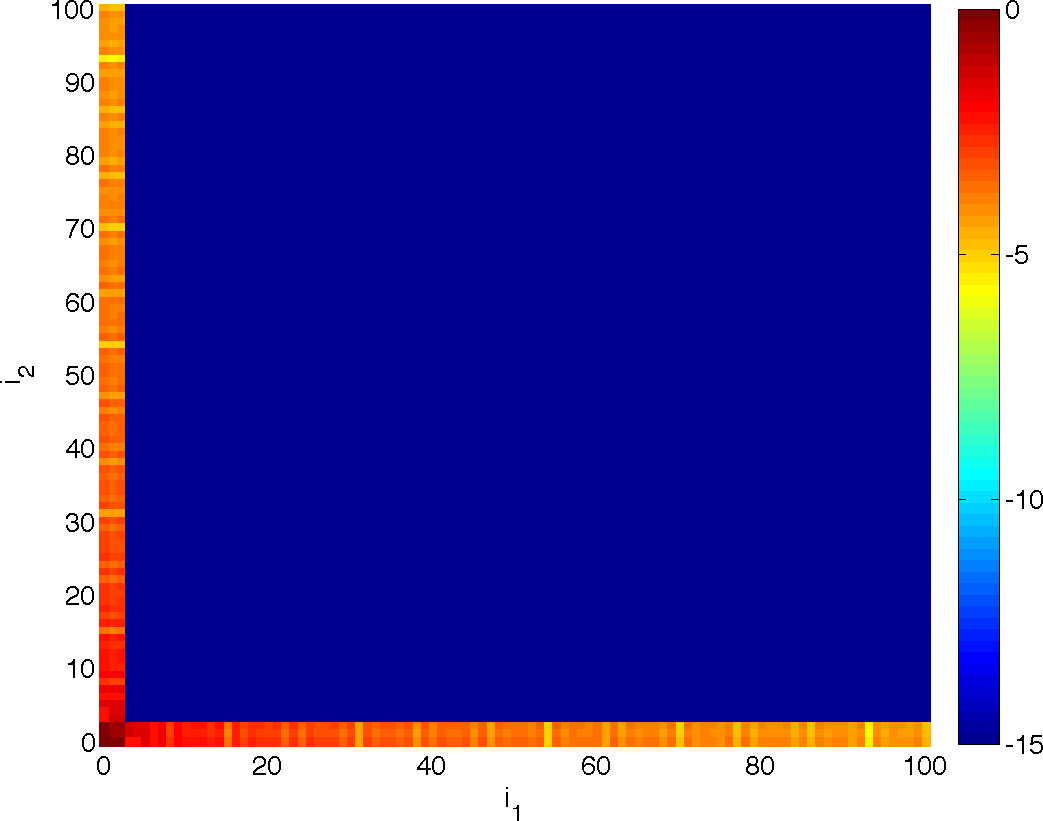}
}
\quad
\subfloat[Coefficient decay]{
\includegraphics[scale=0.35]{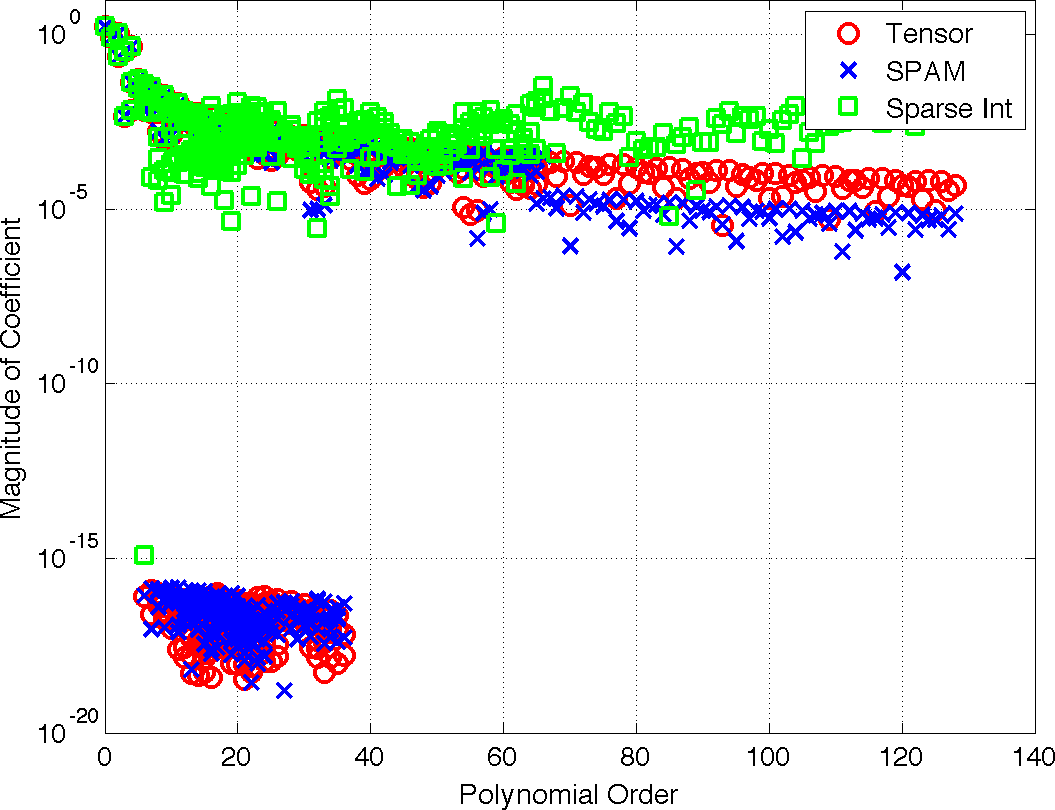}
}\\
\subfloat[SPAM]{
\includegraphics[scale=0.35]{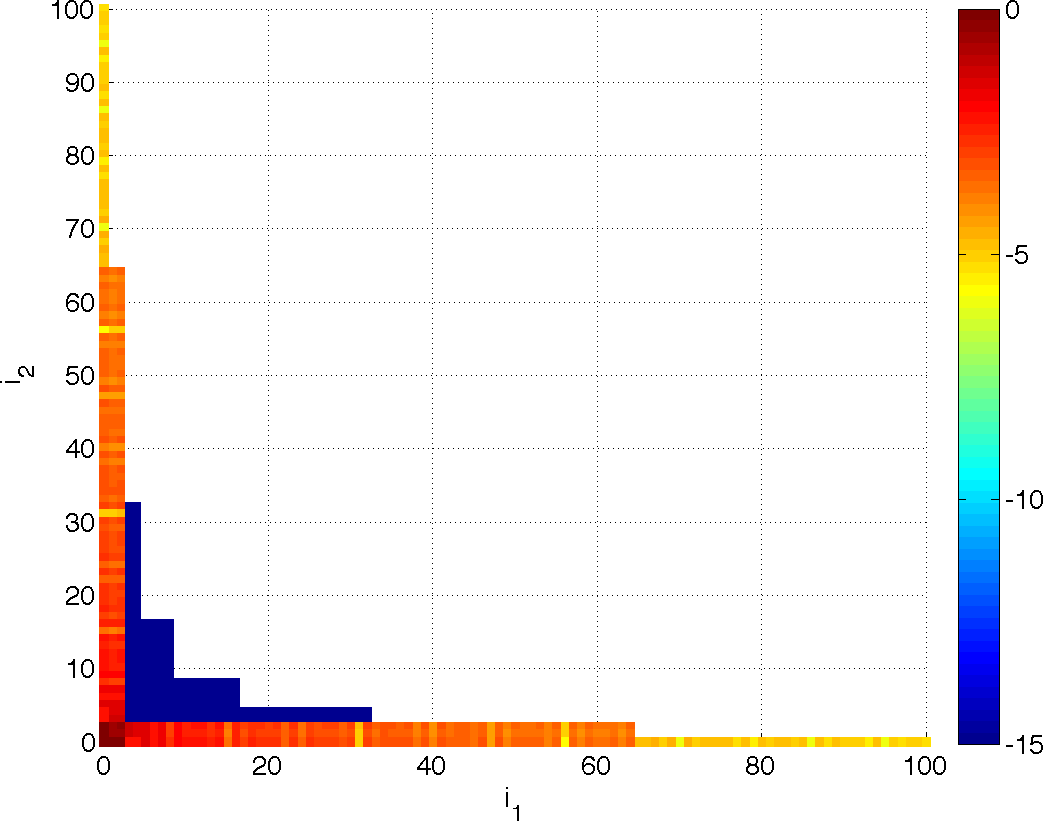}
}
\quad
\subfloat[Sparse grid integration]{
\includegraphics[scale=0.35]{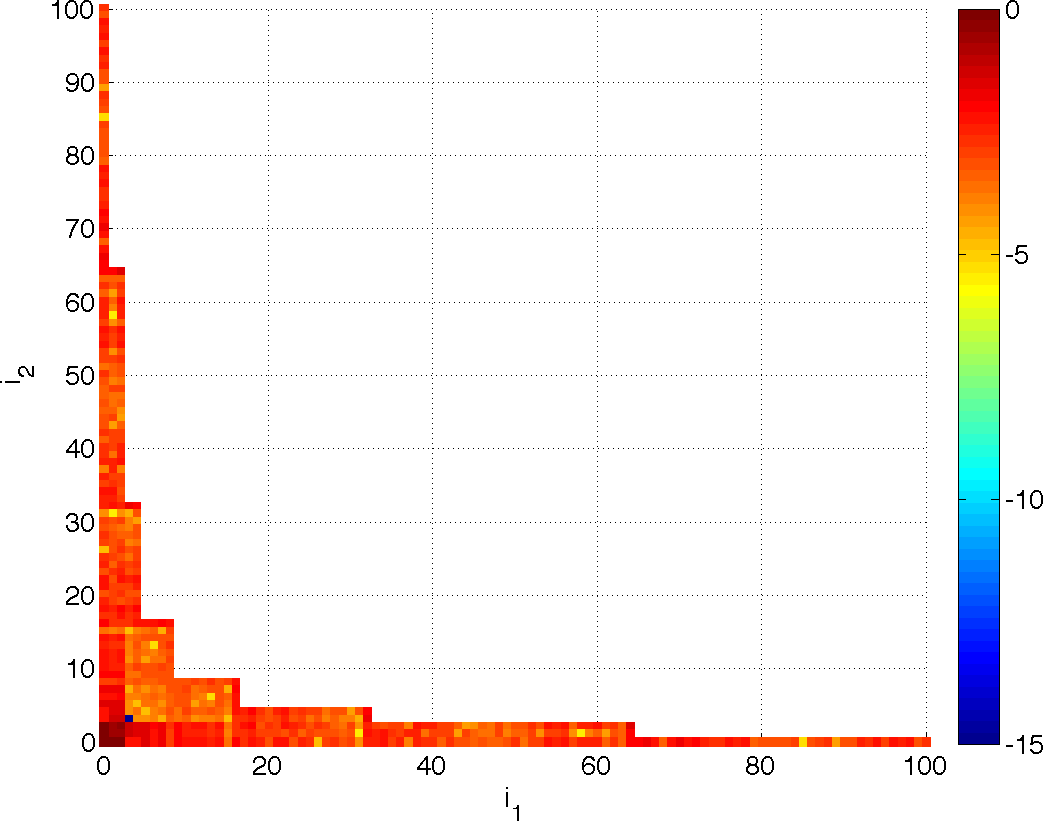}
}
\caption{Fourier coefficient approximations for $(|s_1-0.2|+|s_2+0.2|)^3$.}
\label{fig:bv5}
\end{figure}

\begin{figure}%
\centering
\subfloat[SPAM coefficient error]{\label{fig:err-a}
\includegraphics[scale=0.35]{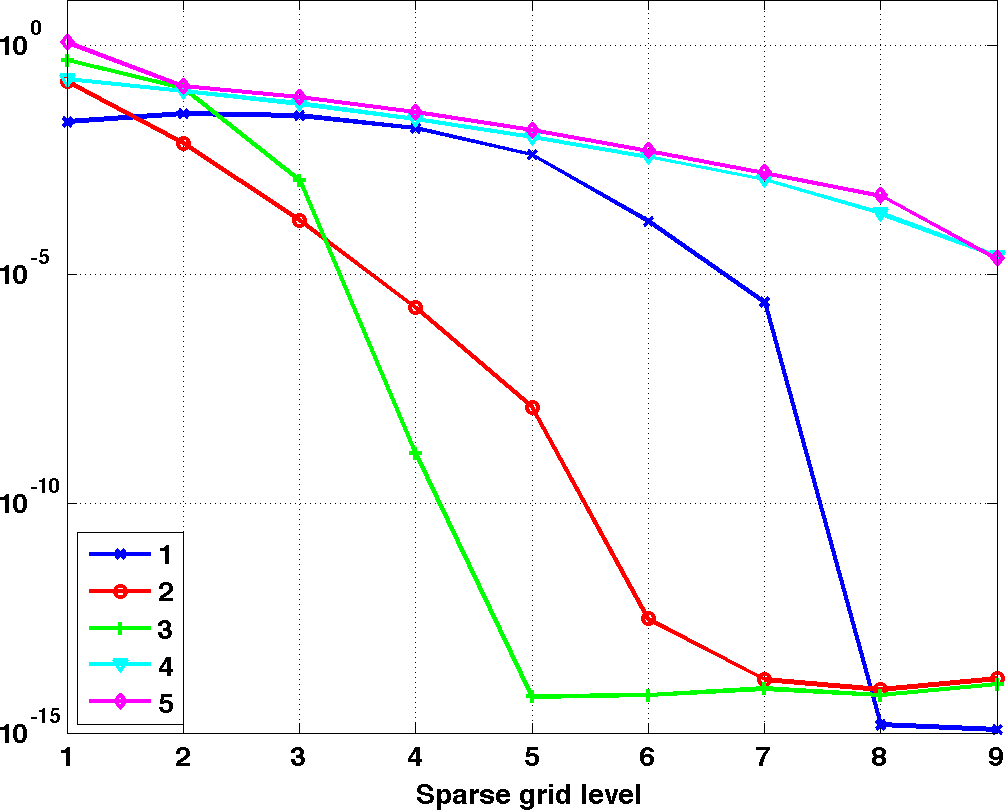}
}\quad
\subfloat[Sparse grid integration coefficient error]{\label{fig:err-b}
\includegraphics[scale=0.35]{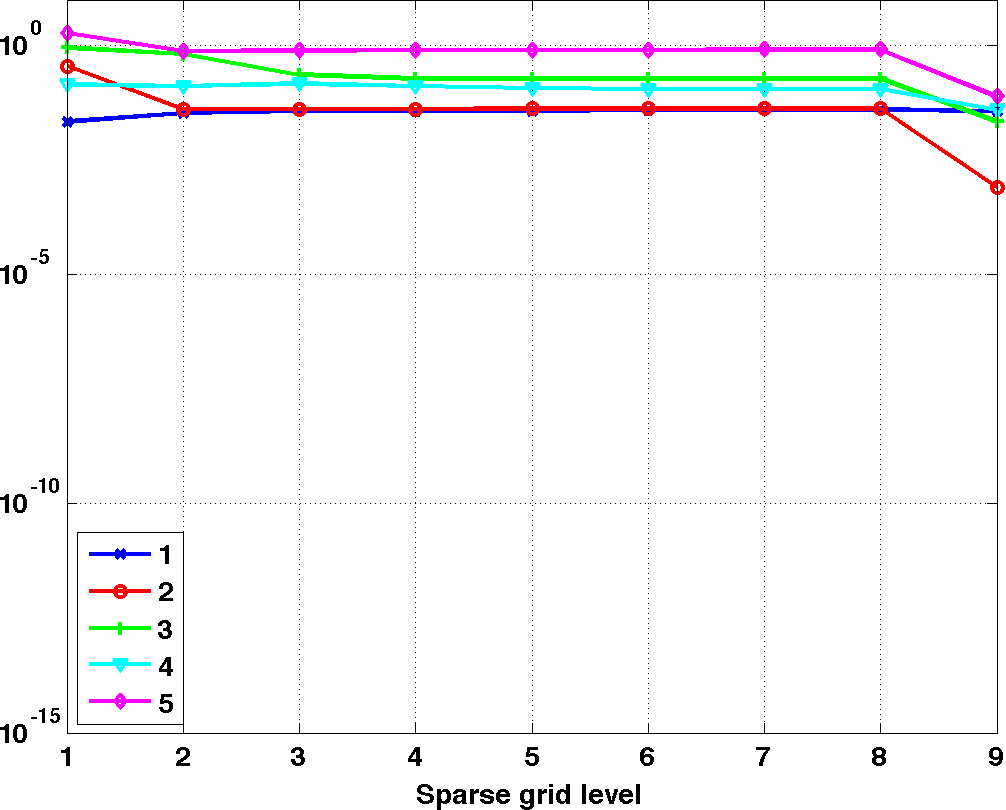}
}\\
\subfloat[Truncation error]{\label{fig:err-c}
\includegraphics[scale=0.35]{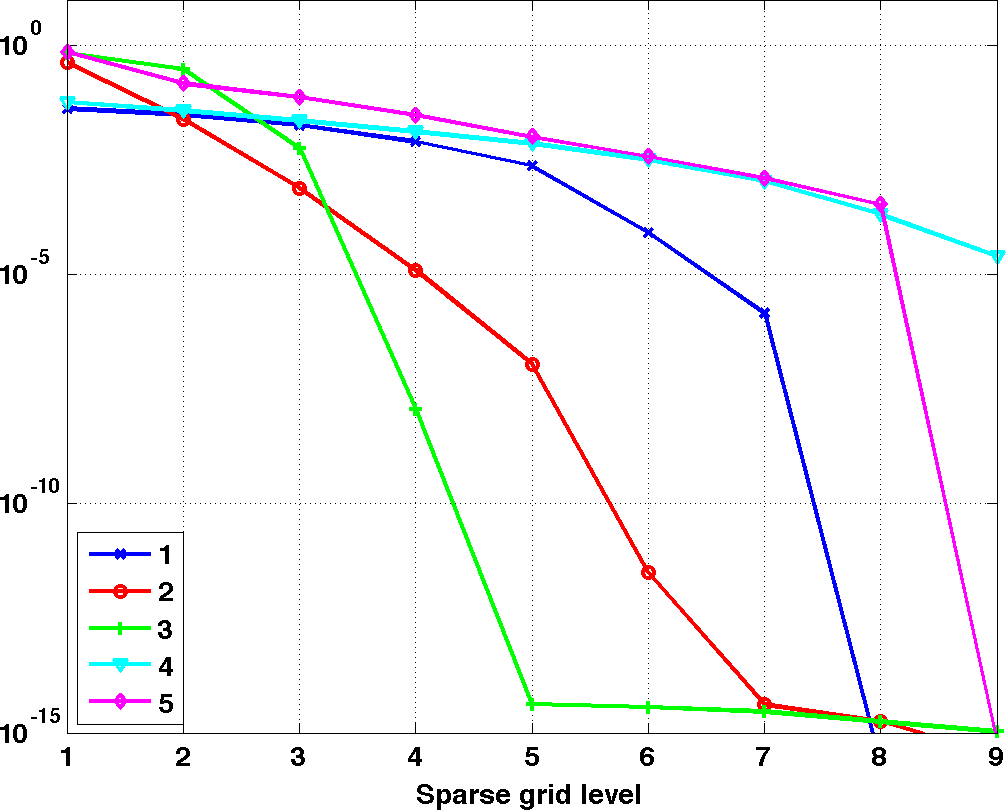}
}
\caption{Comparison of truncation error to error coefficient approximation between SPAM and direct sparse grid
integration for each of the five test functions, numbered according to Table \ref{tab:testfuncs}.}
\label{fig:err}
\end{figure}

\subsection{PDE with Random Input Data}
The last numerical example we examine is a linear elliptic
diffusion equation with a stochastic diffusion coefficient.  Let $D =
[0,1]\times[0,1]$ and $(\Omega,\mathcal{B},P)$ be a complete
probability space.  We seek the function
$u:D\times\Omega\rightarrow\mathbb{R}$ such that the following holds $P$-a.e.:
\begin{equation} \label{eq:pde}
\begin{split}
 -\nabla\cdot(a(x,\omega)\nabla u(x,\omega)) &= 1, \quad x\in D, \\
  u(x) &= 0, \quad x\in\partial D.
\end{split}
\end{equation}
Instead of the whole solution $u$, we are interested in computing the
response function
\begin{equation}
  g(\omega) = \int_D u(x,\omega) dx
\end{equation}
which is the spatial mean of $u$ over $D$.

The diffusion coefficient $a(x,\omega)$ is modeled as a random
field with exponential correlation:
\begin{equation}
  C(x,y) \equiv E[a(x,\omega)a(y,\omega)] = \sigma^2 e^{\|x-y\|_1/L}
\end{equation}
where $\sigma=0.1$ is the standard deviation of the field and $L=1$ is
the correlation length.  It is approximated through a truncated
Karhunen-Lo\'{e}ve expansion~\cite{Loeve78}:
\begin{equation}
  a(x,\omega) \;\approx\; \hat{a}_d(x,s(\omega)) \;=\; a_0(x) \,+\, \sum_{k=1}^d
  \sqrt{\lambda_k} a_k(x)\,s_k(\omega),
\end{equation}
where $a_0(x) = \mu = 0.2$ is the mean of the random field,
$(\lambda_k,a_k(x))$, $k=1,\dots,d$ are eigenvalue-eigenfunction pairs for the
covariance operator:
\begin{equation}
  \int_D C(x,y)a_k(x) dx = \lambda_k a_k(y), \quad y\in D,
\end{equation}
and $s = (s_1,\dots,s_d)$ are uncorrelated,
uniform random variables on $[-1,1]$.  We make the further modeling assumption
that the random variables are independent.  Define $\Gamma = [-1,1]^d$
to be the range of $s$ and 
\begin{equation}
w(s) = \left\{
\begin{array}{cl}
1/2^d & s\in[-1,1]^d\\
0 & \mbox{ otherwise}
\end{array}
\right.
\end{equation}
to be the density of $s$.  The eigenvalues and
eigenfunctions are computed using a pseudo-analytic procedure
described in~\cite{Ghanem91}.  The eigenvalues are sorted
in decreasing order, and we use the first $d=5$
eigenvalues/eigenfunctions to approximate the random field.

Let $\pi_i:[-1,1]\rightarrow\mathbb{R}$, $i=1,2,\dots$ be the normalized
Legendre polynomial of order $i-1$.  For a given multi-index
$\mathbf{i}=(i_1,\dots,i_d)$, define the tensor product polynomial
\begin{equation}
  \pi_{\mathbf{i}}(s) = \pi_{i_1}(s_1)\dots\pi_{i_d}(s_d).
\end{equation}  
Given a set $\sI$ of multi-indices,  we approximate $g(\omega)$ by
\begin{equation} \label{eq:g_pce}
  \hat{g}(\omega) = \sum_{\mathbf{i}\in \sI}
  \hat{g}_{\mathbf{i}}\pi_{\mathbf{i}}(s(\omega))
\end{equation}
where the unknown coefficients $\hat{g}_{\mathbf{i}}$ are computed
through pseudospectral projection using both SPAM
and sparse grid integration.  For a given $s$, the
corresponding response $g$ is computed by solving
\begin{equation} \label{eq:pde_sample}
\begin{split}
 -\nabla\cdot(\hat{a}_d(x,s)\nabla u(x) &= 1, \quad x\in D, \\
  u(x) &= 0, \quad x\in\partial D, \\
  g &= \int_D u(x) dx.
\end{split}
\end{equation}
These equations are discretized using piecewise linear finite
elements over quadrilateral mesh cells of size $1/512$, which gave a spatial error of $\mathcal{O}(10^{-6})$.  The
resulting linear algebraic equations are solved via preconditioned GMRES using
an algebraic multigrid preconditioner with tolerance of $10^{-12}$.
The finite element equations were implemented and solved using a
variety of packages within the Trilinos solver framework~\cite{TrilinosTOMS}.  
The resulting SPAM and sparse grid
integrations were provided by the Dakota package~\cite{DAKOTA}.  

Note that instead of using the growth relationship in \eqref{eq:growth}, we choose 
\begin{equation}
\label{eq:lingro}
n_m=2m-1,\quad m\geq 1.
\end{equation}
This growth relationship yields tensor grids with many fewer points compared to \eqref{eq:growth}. The multiplication
factor $2$ ensures that all tensor grids will share the mid-point of the domain, which reduces the total number of
function evaluations.
The corresponding coefficients of the
stochastic response function $g$ are plotted in Figure~\ref{fig:phys} by the degree of the corresponding multivariate
polynomial. The level parameter for the sparse grid is 4.

One can see
as the order of the polynomials increases, the coefficients generated
by SPAM decay as they should, whereas those generated through direct
sparse integration begin to diverge for the higher order polynomials.
Note, however, that the difference is not as pronounced compared to the bivariate test cases. We attribute this to the
use of the growth relationship \eqref{eq:lingro}, as opposed to
\eqref{eq:growth} used with the bivariate functions. 

\begin{figure}%
\centering
\includegraphics[scale=0.6]{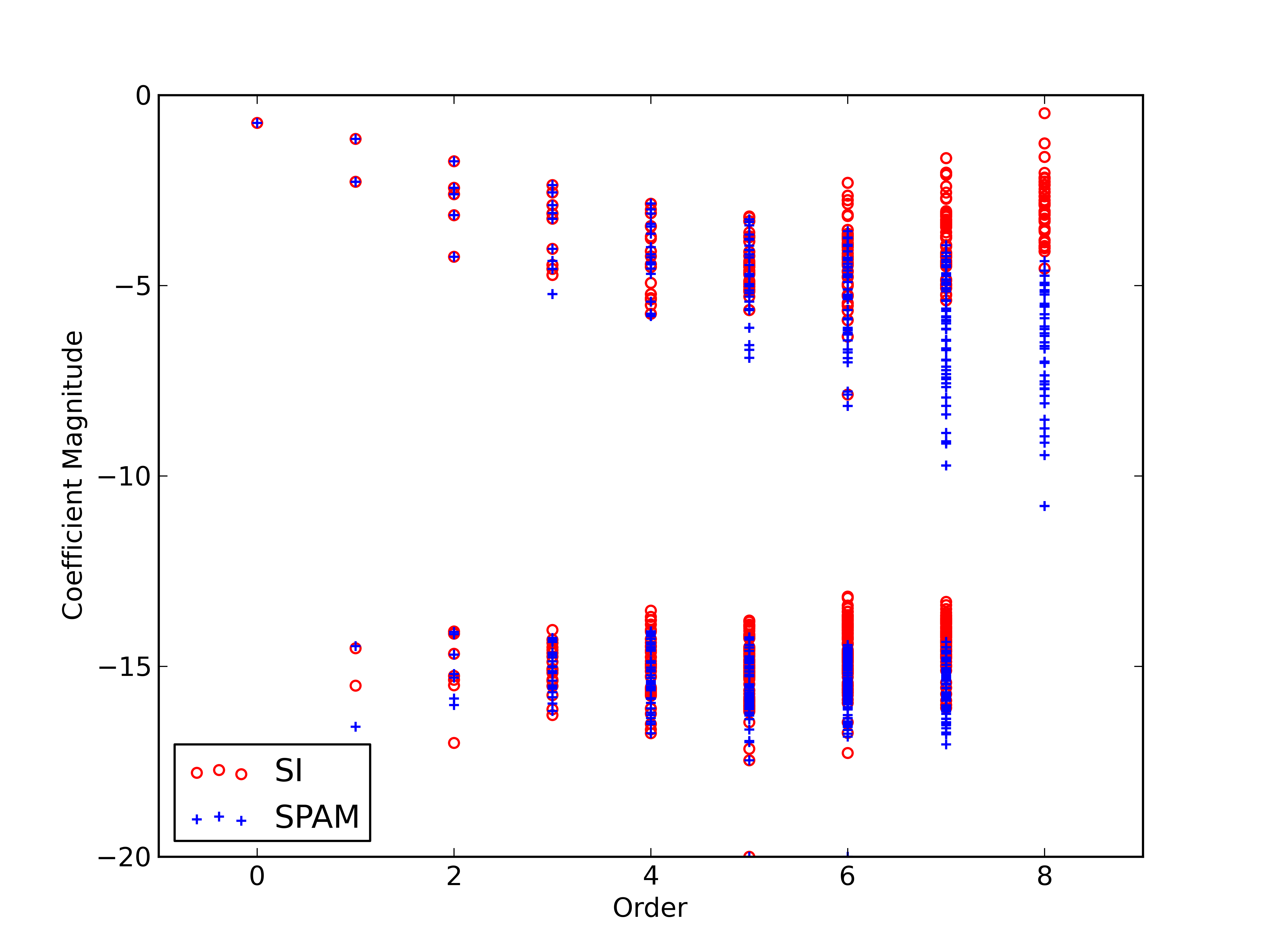}
\caption{Comparison of approximate Fourier coefficients of the
  stochastic response~\eqref{eq:g_pce} of the linear diffusion
  problem~\eqref{eq:pde} using SPAM and sparse integration for dimension $d=5$ and level $l=4$. The
figure plots the coefficients according to the degree of associated polynomial.}
\label{fig:phys}
\end{figure}

\section{Conclusions}
\label{sec:conclusions}

Sparse grid integration rules are constructed as linear combinations of tensor product quadrature rules. By taking
advantage of the equivalence between the tensor product Lagrange interpolant and a pseudospectral approximation with a
tensor product orthogonal polynomial basis, we can linearly combine the tensor product polynomial expansions associated
with each tensor grid quadrature rule to produce a sparse pseudospectral approximation. We have numerically compared
this approach to direct sparse grid integration of the Fourier coefficients. The experiments show
convincingly that the direct integration approach produces inaccurate approximations of the Fourier coefficients
associated with the higher order polynomial basis functions, while the SPAM coefficients are much more accurate.

The difference between SPAM and the sparse grid integration of the Fourier coefficients is present in all Smolyak type
approximations -- including anisotropic and adaptive variants. While not presented explicitly in this paper due to space
limitations, the authors have conducted similar studies on such variants with similar results. The conclusions are
clear. Given a function evaluated at the nodes of a sparse grid integration rule, the proper way to approximate the
Fourier coefficients of an orthogonal expansion is the SPAM.





\bibliographystyle{elsarticle-num}
\bibliography{paulconstantine}

\begin{thebibliography}{10}
\expandafter\ifx\csname url\endcsname\relax
  \def\url#1{\texttt{#1}}\fi
\expandafter\ifx\csname urlprefix\endcsname\relax\def\urlprefix{URL }\fi
\expandafter\ifx\csname href\endcsname\relax
  \def\href#1#2{#2} \def\path#1{#1}\fi

\bibitem{Xiu02}
D.~Xiu, G.~E. Karniadakis, The {Wiener-Askey} polynomial chaos for stochastic
  differential equations, SIAM Journal of Scientific Computing 24 (2002) 619 --
  644.

\bibitem{Ghanem91}
R.~Ghanem, P.~D. Spanos, Stochastic Finite Elements: A Spectral Approach,
  Springer-Verlag, New York, 1991.

\bibitem{Xiu05}
D.~Xiu, J.~S. Hesthaven, High order collocation methods for differential
  equations with random inputs, SIAM Journal of Scientific Computing 27 (2005)
  1118 -- 1139.

\bibitem{Babuska07}
I.~Babu\u{s}ka, F.~Nobile, R.~Tempone, A stochastic collocation method for
  elliptic partial differential equations with random input data, SIAM Journal
  of Numerical Analysis 45 (2007) 1005 -- 1034.

\bibitem{Griebel04}
H.-J. Bungartz, M.~Griebel, Sparse grids, Acta Numerica (2004) 147--269.

\bibitem{Novak96}
E.~Novak, K.~Ritter, High dimensional integration of smooth functions over
  cubes, Numerische Mathematik 75~(1) (1996) 79--97.

\bibitem{Nobile08}
F.~Nobile, R.~Tempone, C.~G. Webster, A sparse grid stochastic collocation
  method for partial differential equations with random input data, SIAM
  Journal of Numerical Analysis 46 (2008) 2309 -- 2345.

\bibitem{Barthelmann00}
V.~Barthelmann, E.~Novak, K.~Ritter, High dimensional polynomial interpolation
  on sparse grids, Advances in Computer Mathematics 12 (2000) 273--288.

\bibitem{Szego39}
G.~Szeg\"{o}, Orthogonal Polynomials, American Mathematical Society,
  Providence, RI, 1939.

\bibitem{Dunkl01}
C.~F. Dunkl, Y.~Xu, Orthogonal Polynomials in Several Variables, Cambridge
  University Press, 2001.

\bibitem{Xiu07}
D.~Xiu, Efficient collocational approach for parametric uncertainty analysis,
  COMMUNICATIONS IN COMPUTATIONAL PHYSICS 2~(2) (2007) 293--309.

\bibitem{LeMaitre20029}
O.~P.~L. Maitre, M.~T. Reagan, H.~N. Najm, R.~G. Ghanem, O.~M. Knio,
  \href{http://www.sciencedirect.com/science/article/pii/S0021999102971044}{A
  stochastic projection method for fluid flow: Ii. random process}, Journal of
  Computational Physics 181~(1) (2002) 9 -- 44.
\newblock \href {http://dx.doi.org/DOI: 10.1006/jcph.2002.7104} {\path{doi:DOI:
  10.1006/jcph.2002.7104}}.
\newline\urlprefix\url{http://www.sciencedirect.com/science/article/pii/S00219%
99102971044}

\bibitem{Reagan2003545}
M.~T. Reagan, H.~N. Najm, R.~G. Ghanem, O.~M. Knio,
  \href{http://www.sciencedirect.com/science/article/pii/S0010218002005035}{Un%
certainty quantification in reacting-flow simulations through non-intrusive
  spectral projection}, Combustion and Flame 132~(3) (2003) 545 -- 555.
\newblock \href {http://dx.doi.org/DOI: 10.1016/S0010-2180(02)00503-5}
  {\path{doi:DOI: 10.1016/S0010-2180(02)00503-5}}.
\newline\urlprefix\url{http://www.sciencedirect.com/science/article/pii/S00102%
18002005035}

\bibitem{Eldred08}
M.~S. Eldred, C.~G. Webster, P.~G. Constantine, Evaluation of non-intrusive
  approaches for {Wiener-Askey} generalized polynomial chaos, AIAA
  Paper~(2008-1892).

\bibitem{Shen10}
J.~Shen, L.-L. Wang, Sparse spectral approximations of high-dimensional
  problems based on hyperbolic cross, SIAM Journal of Numerical Analysis 48
  (2010) 1087--1109.

\bibitem{Shen10-2}
J.~Shen, H.~Yu, Efficient spectral sparse grid methods and applications to
  high-dimensional elliptic problems, SIAM J. Sci. Comput. 32 (2010)
  3228--3250.

\bibitem{Gautschi04}
W.~Gautschi, Orthogonal Polynomials: Computation and Approximation, Clarendon
  Press, Oxford, 2004.

\bibitem{Trefethen08}
L.~N. Trefethen, Is {Gauss} quadrature better than {Clenshaw-Curtis}?, SIAM
  Review 50 (2008) 67--87.

\bibitem{Smolyak63}
S.~Smolyak, Quadrature and interpolation formulas for tensor products of
  certain classes of function, Soviet Math. Dokl. 4 (1963) 240--143.

\bibitem{Patterson68}
T.~N.~L. Patterson, The optimum addition of points to quadrature formulae,
  Mathematics of Computation 22 (1968) 847--856.

\bibitem{Clenshaw60}
C.~W. Clenshaw, A.~R. Curtis, A method for numerical integration on an
  automatic computer, Numerische Mathematik 2 (1960) 197--205.

\bibitem{Nobile08-2}
F.~Nobile, R.~Tempone, C.~G. Webster, An anisotropic sparse grid stochastic
  collocation method for partial differential equations with random input data,
  SIAM Journal of Numerical Analysis 46 (2008) 2411--2442.

\bibitem{Gerstner98}
T.~Gerstner, M.~Griebel, Numerical integration using sparse grids, Numerical
  Algorithms 18 (1998) 209--232.

\bibitem{Novak99}
E.~Novak, K.~Ritter, \href{http://dx.doi.org/10.1007/s003659900119}{Simple
  cubature formulas with high polynomial exactness}, Constructive Approximation
  15 (1999) 499--522, 10.1007/s003659900119.
\newline\urlprefix\url{http://dx.doi.org/10.1007/s003659900119}

\bibitem{Liu06}
R.~Liu, A.~B. Owen, Estimating mean dimensionality of analysis of variance
  decompositions, Journal of the American Statistical Association 101 (2006)
  712--721.

\bibitem{Loeve78}
M.~Lo\`{e}ve, Probability Theory II, Springer-Verlag, 1978.

\bibitem{TrilinosTOMS}
M.~Heroux, R.~Bartlett, V.~Howle, R.~Hoekstra, J.~Hu, T.~Kolda, R.~Lehoucq,
  K.~Long, R.~Pawlowski, E.~Phipps, A.~Salinger, H.~Thornquist, R.~Tuminaro,
  J.~Willenbring, A.~Williams, K.~Stanley, An overview of the {T}rilinos
  package, ACM Trans. Math. Softw. 31~(3), \url{http://trilinos.sandia.gov/}.

\bibitem{DAKOTA}
M.~S. Eldred, B.~M. Adams, D.~M. Gay, L.~P. Swiler, K.~Haskell, W.~J. Bohnhoff,
  J.~P. Eddy, W.~E. Hart, J.-P. Watson, P.~D. Hough, T.~G. Kolda, {DAKOTA}, a
  multilevel parallel object-oriented framework for design optimization,
  parameter estimation, uncertainty quantification, and sensitivity analysis,
  Tech. Rep. SAND2006-6337, Sandia National Laboratories (2006).

\end{thebibliography}







\end{document}